\title{Dynamics of weakly mixing non-autonomous systems}
\author{Mohammad Salman$^a$, \  Ruchi Das$^{a,\dag}$}
\title{Dynamics of weakly mixing non-autonomous systems}
\theoremstyle{definition}
\newtheorem{defn}{Definition}[section]
\providecommand{\keywords}[1]{\textbf{Keywords :} #1}
\providecommand{\msc}[1]{\textbf{Mathematics Subject Classification(2010)} #1}
\theoremstyle{plain}
\newtheorem{thm}{Theorem}[section]
\newtheorem{cor}[defn]{Corollary}
\newtheorem{lem}[thm]{Lemma}
\theoremstyle{definition}
\newtheorem{exm}{Example}[section]
\newtheorem{rmk}{Remark}[section]
\begin{document}
\date{}
\maketitle

\begin{abstract}
 For a commutative non-autonomous dynamical system we show that topological transitivity of the non-autonomous system induced on probability measures (hyperspaces) is equivalent to the weak mixing of the induced systems. Several counter examples are given for the results which are true in autonomous but need not be true in non-autonomous systems. Wherever possible sufficient conditions are obtained for the results to hold true. For a commutative periodic non-autonomous system on intervals,  it is proved that weakly mixing implies Devaney chaos. Given a periodic non-autonomous system, it is shown that sensitivity is equivalent to some stronger forms of sensitivity on a closed unit interval. 
\end{abstract}

\keywords{Non-autonomous; Probability measures; Hyperspaces; Weakly mixing; Sensitivity}

\msc{Primary 54H20; Secondary 37B55}
\bigskip\renewcommand{\thefootnote}{\fnsymbol{footnote}}
\footnotetext{\hspace*{-5mm}
\renewcommand{\arraystretch}{1}
\begin{tabular}{@{}r@{}p{11cm}@{}}
$^\dag$& the corresponding author. \emph{Email addresses}: rdasmsu@gmail.com (R. Das), salman25july@gmail.com (M. Salman)\\
$^a$&Department of Mathematics, University of Delhi, Delhi-110007, India
\end{tabular}}
\vspace{-2mm}
\section{Introduction}
The theory of dynamical systems is a successful mathematical tool to describe time-varying phenomena. Its wide application area varies from simple motion of a pendulum to complicated climate models in physics and complex signal transduction process in biological cells. One of the most significant constituent of dynamical systems theory is chaos which is closely related to different forms of mixing. Parameters in real-world problems are rarely independent of time and thus non-autonomous systems, whose law of behavior is influenced by external forces, are widely useful over the last decades. The external influence can be of different nature, it could be a periodic force or a noisy process which could modulate the functional relationships that define the interactions among systems. Such systems constitute the core mechanism for non-stationary output signals. Also, for an appropriate time-dependence of the external  factor, there is a possibility that a non-autonomous dynamics is stationary. Therefore,  a non-autonomous system acts as a functional generator for both stationary and non-stationary dynamics. In this paper, we consider the following non-autonomous discrete dynamical system, which can be seen as the discrete analogue of a non-autonomous differential equation
\begin{equation}\label{eq} x_{n+1} = f_n(x_n) ,  \  n \geq 1, \end{equation}
where $(X,d)$ is a metric space and $f_n: X \rightarrow X$ is a continuous map, for each $n\geq 1$ and the $n$-th iterate is given by composition of different maps $f_i$'s. When $f_n = f$, for each $n\geq 1$, then the above system becomes autonomous dynamical system $(X, f)$. In 1996, non-autonomous discrete dynamical systems were introduced by Kolyada and Snoha \cite{8}. There are different types of non-autonomous systems such as uniformly convergent, finitely generated, periodic, etc. It often happens that the external factor responsible for the non-autonomicity is itself deterministic, for instance, periodic, which shows the significance of finitely generated non-autonomous systems. The study of complexity and chaos of non-autonomous systems has seen an increasing interest of many researchers  in recent years \cite{1, 5, 11, 14, 16, 17, 18, 19, 21}.

A dynamical system induces two natural systems; one on hyperspaces and the other on probability measures spaces. Such systems are crucial as most of the natural phenomena occur collectively as union of several components and hence non-autonomous system on induced spaces plays an improtant role in dynamics of nature. Probability measures have varied applications in different fields such as physics, finance, biology, etc. In 1975, Bauer and Sigmund initiated study of connection of various topological  properties of dynamical systems with the induced systems in \cite{3}. Since then both the induced systems have been studied by many authors, cf., \cite{2, 4, 9, X, 10, 20}. Recently, we have studied different forms of specification properties on induced systems \cite{15}.   Weak mixing is vital in the study of chaos theory as it implies sensitivity which is a key ingredient of chaos. Also, it is important to measure how much chaotic the system is, which resulted in the study of stronger forms of sensitivity initiated by Moothathu in \cite{12}. Devaney chaos and sensitivity for non-autonomous dynamical systems were introduced by Tian and Chen \cite{18}. Ever since the study of non-autonomous systems started two natural questions regarding Devaney chaos are : When does topological transitivity and density of periodic points imply sensitivity? When does topological transitivity on intervals implies Devaney chaos?  Many authors have given  different sufficient conditions in \cite{11, 21}, which answers the first question. Recently, we proved that on finitely generated non-autonomous systems transitivity and density of periodic points imply thickly syndetical sensitivity and hence sensitivity \cite{14}. The second one is still an open problem.

This paper is organized as follows. In Section 2, we give the preliminaries required for the remaining sections  of the paper. In Section 3, we study the interrelation of weakly mixing and other related properties of a non-autonomous system and its induced probability measures non-autonomous system.  Equivalence of weakly mixing and topological transitivity of a commutative induced non-autonomous system on probability measures space and on hyperspaces is shown. In Section 4, we give counter examples for the results which are true in autonomous systems but not in non-autonomous systems and where ever possible sufficient conditions are obtained for the results to hold true. For a commutative finitely generated non-autonomous system it is shown that every weakly mixing system is thickly sensitive. It is proved that on commutative periodic non-autonomous systems weakly mixing implies Devaney chaos and if an induced autonomous system from a periodic non-autonomous system is topologically transitive on intervals, then the non-autonomous system is Devaney chaotic.  In Section 5, we first give examples to show that in non-autonomous systems sensitivity need not be equivalent to cofinite sensitivity and other stronger forms of sensitivity. Sufficient condition is given under which sensitivity is equivalent to some stronger forms of sensitivity in a non-autonomous system.

\section{Preliminaries}
In this section, we give some definitions and relevant concepts which are required for remaining sections of the paper. Throughout this paper, $(X, d)$ will denote a compact metric space, $\mathbb{N}$  the set of natural numbers and  $B_d(x, \epsilon)$ the open ball of radius $\epsilon>0$ and  center $x$.
 Denote $f_{1,\infty} := \{f_n\}_{n=1}^{\infty}$, and for all positive integers $i$ and $n$, $f_n^i := f_{n+i-1}\circ\cdots\circ f_n$, $f_n^0 := id$ and  the $k^{th}$ iterate by $f_1^{[k]} = \{f_{k(n-1)+1}^k\}_{n=1}^{\infty}$, for any $k\in\mathbb{N}$. 
 
  We say that $(X, f_{1,\infty})$ is a $periodic$ discrete system, if there exists $k\in\mathbb{N}$ such that $f_{n+km}(x) = f_n(x)$, for any $x\in X$, $m\in\mathbb{N}$ and $1\leq n\leq k$. A non-autonomous system $(X, f_{1,\infty})$ is said  to be \emph{finitely generated}, if there exists a finite set $F$ of continuous self maps on $X$ such that each $f_i$ of $f_{1,\infty}$ belongs to $F$. Clearly, every periodic non-autonomous system is finitely generated. For the system (\ref{eq}), the $orbit$ of any point $x\in X$  is the set, $\{f_1^n(x) : n\geq 0\} = \mathcal{O}_{f_{1,\infty}}(x)$. A point $x\in X$ is said to be \emph{periodic}, for the non-autonomous system $(X, f_{1,\infty})$, if there exists $n\in\mathbb{N}$ such that $f_1^{nk}(x) = x$, for every $k\in\mathbb{N}$ \cite{11}. We say that the system $(X, f_{1,\infty})$ has \emph{dense small periodic set} if for any non-empty open subset $U$ of $X$, there exist a closed subset $F\subseteq U$ and $n\in\mathbb{N}$ with $f_1^{nk}(F)\subseteq F$, for every $k\geq 1$. If $V$ is a non-empty, closed and invariant subset of $X$, and no proper subset of $V$ is non-empty, closed and invariant, then $V$ is said to be a \emph{minimal} subset of $(X, f_{1,\infty})$. For a non-autonomous system $(X,f_{1,\infty})$, we put $X^{2}$ = $X \times X$ and $(f_{1,\infty})^{2}$ = ($g_1$, $g_2$, \ldots  , $g_n$, \ldots), where $g_{n}$ = $f_{n}$ $\times$ $f_{n}$, for each positive integer $n$. Therefore, $(X^{2},(f_{1,\infty})^{2})$ is a non-autonomous dynamical system. We have, 
$g_1^n = g_n \circ g_{n-1}\circ\cdots\circ g_2\circ g_1
               = (f_n \times f_n)\circ (f_{n-1} \times f_{n-1})\circ\cdots\circ (f_2 \times f_2) \circ (f_1 \times f_1) = f_{1}^{n}\times f_{1}^{n}$. Similarly we can define $(X^{m},(f_{1,\infty})^{m})$ in general for any positive integer $m$.

Let $\cal{K}$($X)$ denote the hyperspace of all non-empty compact subsets of $X$ endowed with the \textit{Vietoris Topology}. A basis  for Vietoris topology is given by the sets, $\langle U_1, U_2$, $\ldots , U_k\rangle$ = $\{K \in$ $\cal{K}$($X)$: $K \subseteq \bigcup_{i=1}^{k} U_{i}$ and $K\cap U_{i}$  $\ne \emptyset$, for each $i$ $\in \{1, 2, \ldots ,k\}$\}, where $U_1, U_2, \ldots ,U_k$ are non-empty open subsets of $X$. Let $x \in X$, $A \in$ $\cal{K}$($X)$, then we write $N(A, \epsilon)$ = $\bigcup_{a \in A} B_d(\epsilon, a)$. The Hausdorff metric in $\cal{K}$($X)$  induced by $d$, denoted by $\cal{H}$ is defined  by $\cal{H}$($A, B) = \inf \{ \epsilon > 0: A \subseteq N(B, \epsilon) \  \text{and} \  B \subseteq N(A, \epsilon)\}$,  where $A$, $B \in$ $\cal{K}$($X)$. The topology induced by the Hausdorff metric on $\mathcal{K}(X)$ coincides with the Vietoris topology if and only if the space $X$ is compact \cite{7}. Let $(X, f_{1,\infty})$ be a non-autonomous  dynamical system and $\overline{f}_n$ the  continuous function on $\cal{K}$($X)$ induced by $f_n$ defined by $\overline{f}_n(K) = f_n(K)$, $K\in\mathcal{K}(X)$, for each $n\in\mathbb{N}$. Then the sequence $\overline{f}_{1,\infty}$ = ($\overline{f}_1,\ldots, \overline{f}_n, \ldots )$ induces a non-autonomous discrete dynamical system ($\cal{K}$($X), \overline{f}_{1,\infty})$, where $\overline{f}_1^n = \overline{f}_n \circ \cdots \circ \overline{f}_2\circ \overline{f}_1$.

Let $\mathcal{B}(X)$ be the $\sigma$-algebra of Borel subsets of $X$ and $\mathcal{M}(X)$ be the set of all \emph{Borel probability measures} on $(X, \mathcal{B}(X))$ and $\mathcal{M}(X)$ be equipped with the \emph{Prohorov metric $\mathcal{D}$} defined by $\mathcal{D}(\mu, \nu)$ = $\inf\{\epsilon: \mu(A)\leq \nu(N(A, \epsilon))+\epsilon \ \text{and} \ \nu(A)\leq \mu(N(A, \epsilon))+\epsilon$, for each $A\in\mathcal{B}(X)\}$. It is known that topology induced by $\mathcal{D}$ is the \emph{weak*-topology} on $\mathcal{M}(X)$ \cite{6}. For $x\in X$, $\delta_x\in \mathcal{M}(X)$ denote \emph{Dirac point measure}, given by $\delta_x(A) = 1$, if $x\in A$ and $0$ otherwise. Let $\mathcal{M}_n(X) =\{(\sum_{i=1}^n\delta_{x_i})/n : x_i\in X \ \text{(not necessarily distinct)}\}$ and $\mathcal{M}_{\infty}(X) = \bigcup_{n\in\mathbb{N}}\mathcal{M}_n(X)$. It is known that $\mathcal{M}_{\infty}(X)$ is dense in $\mathcal{M}(X)$ and each $\mathcal{M}_n(X)$ is closed in $\mathcal{M}(X)$ \cite{3}. For a non-autonomous system $(X, f_{1,\infty})$, we consider the non-autonomous induced system $(\mathcal{M}(X), \widetilde{f}_{1,\infty})$, where each $\widetilde{f}_{i}: \mathcal{M}(X)\to \mathcal{M}(X)$ is induced continuous function and $\widetilde{f}_{1}^n(\mu)(A) = \mu(f_1^{-n}(A))$, $\mu\in \mathcal{M}(X)$, $A\in\mathcal{B}(X)$ and $f_1^{-n} = (f_1^n)^{-1}$.

\begin{defn}\cite{18} A non-autonomous system $(X, f_{1,\infty})$ is said to be \emph{topologically transitive}, if for each pair of non-empty open subsets $U$, $V$ of $X$, there exists $n\in\mathbb{N}$ such that $f_1^n(U)\cap V\ne\emptyset$. For any two non-empty  open subsets $U$ and $V$ of $X$ denote, $N_{f_{1,\infty}}(U,V)= \{n\in \mathbb{N} : f_1^n(U)\cap V\ne \emptyset \}$. A non-autonomous system $(X, f_{1,\infty})$ is  \emph{totally transitive} if $f_{1,\infty}^{[n]}$ is topologically transitive, for every $n\geq 1$ and  \emph{topologically mixing} if there exists $n\in\mathbb{N}$ such that $N_{f_{1,\infty}}(U,V) \supseteq [n, \infty)$, for any pair of non-empty open subsets $U,$ $V$ of $X$ \cite{5}.
\end{defn}
Recall that a non-autonomous system $(X, f_{1,\infty})$ is said to be \emph{point transitive} if there exists $x\in X$ having dense orbit in $X$.  Also, if $(X, f_{1,\infty})$ is topologically transitive, then it is point transitive and the set of such points is dense in $X$ \cite{16}.
\begin{defn}\cite{16} A non-autonomous dynamical system $(X,f_{1,\infty})$ satisfies \emph{Banks's condition} if for any three non-empty open subsets $U$, $V$ and $W$ of $X$, there exists a positive integer $n$ such that $f_1^n(U)\cap V\ne\emptyset$ and $f_1^n(U)\cap W\ne\emptyset$. 
\end{defn}
\begin{defn} A non-autonomous dynamical system $(X,f_{1,\infty})$ is said to be \textit{weakly mixing of order m} ($m\geq2)$, if for any non-empty open subsets $U_1$, $U_2$, $\ldots$ , $U_m$, $V_1$, $V_2$, $\ldots$ , $V_m$ there exists  $n\in\mathbb{N}$, such that $f_{1}^{n}(U_{i}) \cap V_{i} \ne \emptyset$,  for each $1 \leq i \leq m$. If the non-autonomous system $(X, f_{1,\infty})$ satisfies above condition for $m =2$, then it is called \emph{weakly mixing}.
\end{defn}
Clearly, we have weakly mixing $\implies$ Banks's condition $\implies$ topological transitivity. In fact, for autonomous systems Banks proved that weakly mixing and Banks's condition are equivalent. However, for non-autonomous system this is not true as shown by authors in \cite{16}.
\begin{defn}\cite{18} The system $(X, f_{1,\infty})$ is said to exhibit \emph{sensitive dependence on initial conditions} if there exists $\delta>0$ such that, for every $x\in X$ and any neighborhood $U$ of $x$, there exist $y\in U$ and $n\in\mathbb{N}$ with $d(f_1^n(x), f_1^n(y))>\delta$; $\delta>0$ is called a constant of sensitivity.

 We shall denote $N_{f_{1,\infty}}(U,\delta) = \{n\in \mathbb{N}:  \text{there exist} \ x,y \in U \ \text{such that}\ d(f_1^n(x),f_1^n(y))$ $> \delta \}$, for any arbitrary open subset $U$ of $X$. \end{defn}
A non-autonomous system $(X, f_{1,\infty})$ is said to be \emph{ Devaney chaotic} on $X$ if  it is topologically transitive, has a dense set of periodic points and has sensitive dependence on $X.$
\begin{defn}\cite{5} A non-autonomous system $(X, f_{1,\infty})$ is \emph{cofinitely sensitive} if there exists $\delta>0$ such that for any open subset $U$ of $X$,  $N_{f_{1,\infty}}(U, \delta)$ is cofinite, that is, there exists $N\in\mathbb{N}$ with $N_{f_{1,\infty}}(U, \delta)\supseteq [N, \infty)\cap \mathbb{N}$.
\end{defn}
\begin{defn} A non-autonomous system $(X, f_{1,\infty})$ is \emph{multi-sensitive}, if there exists $\delta>0$ such that for any $m\in\mathbb{N}$ and any collection of non-empty open subsets $V_1$, $V_2$, \ldots, $V_m$ of $X$,  $\bigcap_{i=1}^mN_{f_{1,\infty}}(V_i, \delta)\ne\emptyset$, where $\delta>0$ is  a constant of sensitivity. 
\end{defn}
\begin{defn} A set $F\subseteq\mathbb{N}$ is called $\textit{syndetic}$ if there exists a positive integer $a$ such that $\{i,i+1, \ldots, i+a\}\cap F\ne\emptyset$, for each $i\in\mathbb{N}$. A non-autonomous system $(X, f_{1,\infty})$ is \emph{syndetically sensitive}  if there exists $\delta>0$ such that for any open subset $U$ of $X$,  $N_{f_{1,\infty}}(U, \delta)$ is syndetic.
\end{defn}
\begin{defn}
A $\textit{thick set}$ is a set of integers that contains arbitrarily long runs of positive integers, that is, given a thick set $T$, for every $p\in \mathbb{N}$, there is some $n\in \mathbb{N}$  such that $\{n,n+1,n+2, \ldots, n+p\}$ $\subseteq T$. A non-autonomous system $(X, f_{1,\infty})$ is \emph{thickly sensitive},  if there exists $\delta>0$ such that for any open subset $U$ of $X$, $N_{f_{1,\infty}}(U, \delta)$ is thick.
\end{defn}
\begin{defn} A non-autonomous system $(X, f_{1,\infty})$ is said to be \emph{collectively sensitive}, if for $x_1$, $x_2$, \ldots, $x_m\in X$ and  any $\epsilon>0$ there exists  $y_i\in X$, for each $i\in\{ 1, 2,\ldots, m\}$ with $d(x_i, y_i)<\epsilon$, $n\in\mathbb{N}$ and $i_0$ with $1\leq i_0\leq m$ such that $d(f_1^n(x_i), f_1^n(y_{i_0}))>\delta$ or $d(f_1^n(y_i), f_1^n(x_{i_0}))>\delta$, for some $\delta>0$.
\end{defn}
\begin{defn} Let $S\subseteq\mathbb{N}$ and $|S|$ denote the cardinality of $S$. Then
 $\overline{d}(S) = \limsup\limits_{n\to \infty}\frac{1}{n}|S\cap \{0,1,2, \ldots, n-1\}|$ is called the \textit{upper density} of $S$. A non-autonomous system $(X, f_{1,\infty})$ is said to be \emph{ergodic sensitive}, if there exists $\delta>0$ such that for any open subset $U$ of $X$,  $N_{f_{1,\infty}}(U, \delta)$ has positive upper density.
\end{defn}

\section{Weakly Mixing on Induced Systems}
In  this section, we study the interrelation of weakly mixing and other related properties of $(X, f_{1,\infty})$ and its induced probability measures non-autonomous system $(\mathcal{M}(X), \widetilde{f}_{1,\infty})$. Equivalence of weakly mixing and topological transitivity of a commutative induced non-autonomous system on probability measures space and hyperspaces is proved.

\begin{thm}\label{3.4} If the induced system $(\mathcal{M}(X), \widetilde{f}_{1,\infty})$ is topologically transitive, then  $(X, f_{1,\infty})$ satisfies Banks's condition.
\end{thm}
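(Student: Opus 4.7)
The plan is to exploit topological transitivity of $(\mathcal{M}(X),\widetilde{f}_{1,\infty})$ by choosing two well-crafted open sets in $\mathcal{M}(X)$: one near a Dirac mass on $U$ (forcing any measure in it to be concentrated in $U$), and one near a two-point combination $(\delta_v + \delta_w)/2$ with $v\in V$, $w\in W$ (forcing any measure in it to distribute mass between $V$ and $W$). A single transitive trajectory between these two open sets should then, via the defining identity $\widetilde{f}_1^n(\mu)(A)=\mu(f_1^{-n}(A))$, yield a point of $U$ whose $f_1^n$-image lies in both $V$ and $W$ (up to the usual measure-theoretic pigeonhole).

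More concretely, fix $\varepsilon\in(0,1/4)$ and pick $u\in U$, $v\in V$, $w\in W$. Let
\[
\mathcal{U}=\{\mu\in\mathcal{M}(X):\mu(U)>1-\varepsilon\},\qquad
\mathcal{W}=\{\mu\in\mathcal{M}(X):\mu(V)>1/2-\varepsilon,\ \mu(W)>1/2-\varepsilon\}.
\]
Since $\mu\mapsto\mu(G)$ is lower semicontinuous on $\mathcal{M}(X)$ for any open $G\subseteq X$ (Portmanteau, equivalent to the weak$^*$ topology used here), both $\mathcal{U}$ and $\mathcal{W}$ are open in $\mathcal{M}(X)$. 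They are also non-empty: $\delta_u\in\mathcal{U}$ and $(\delta_v+\delta_w)/2\in\mathcal{W}$.

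Now I apply topological transitivity of $(\mathcal{M}(X),\widetilde{f}_{1,\infty})$ to obtain $\mu\in\mathcal{U}$ and $n\in\mathbb{N}$ with $\widetilde{f}_1^n(\mu)\in\mathcal{W}$. Translating through $\widetilde{f}_1^n(\mu)(A)=\mu(f_1^{-n}(A))$ gives
\[
\mu(U)>1-\varepsilon,\qquad \mu(f_1^{-n}(V))>1/2-\varepsilon,\qquad \mu(f_1^{-n}(W))>1/2-\varepsilon.
\]
The elementary bound $\mu(A\cap B)\geq \mu(A)+\mu(B)-1$ then yields $\mu(U\cap f_1^{-n}(V))>1/2-2\varepsilon>0$ and likewise $\mu(U\cap f_1^{-n}(W))>0$. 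In particular both intersections are non-empty, so $f_1^n(U)\cap V\neq\emptyset$ and $f_1^n(U)\cap W\neq\emptyset$, which is precisely Banks's condition for the chosen $U,V,W$.

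The main (and essentially only) obstacle is identifying the right test measures and confirming that the defining sets are open in the weak$^*$ topology; once that is in place the rest is a short measure-theoretic pigeonhole. No hypothesis beyond plain topological transitivity of $(\mathcal{M}(X),\widetilde{f}_{1,\infty})$ is needed, and commutativity of $f_{1,\infty}$ is not used in this argument.
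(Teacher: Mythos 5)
Your proof is correct and follows essentially the same strategy as the paper: define open sets in $\mathcal{M}(X)$ by lower bounds on the measure of $U$, $V$, $W$, apply transitivity of $(\mathcal{M}(X),\widetilde{f}_{1,\infty})$, and finish with the pigeonhole bound $\mu(A\cap B)\ge\mu(A)+\mu(B)-1$. In fact your choice of thresholds is better than the paper's: the paper takes $W_2=\{\mu:\mu(V_1)>4/5 \text{ and } \mu(V_2)>4/5\}$, which is empty whenever $V_1\cap V_2=\emptyset$ (since then $\mu(V_1)+\mu(V_2)\le 1$), whereas your set $\mathcal{W}$ with thresholds $1/2-\varepsilon$ contains $(\delta_v+\delta_w)/2$ and is non-empty for arbitrary $V$, $W$, so your version actually repairs this defect while the overall architecture of the argument is unchanged.
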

\begin{proof} Let $U$, $V_1$, $V_2$ be non-empty open subsets of $X$. Let $W_1 = \{\mu\in\mathcal{M}(X) : \mu(U)> 4/5\}$ and $W_2 = \{\mu\in \mathcal{M}(X) : \mu(V_1)>4/5 \ \text{and} \ \mu(V_2)>4/5\}$, then $W_1$ and $W_2$ are non-empty open subsets of $\mathcal{M}(X)$. If $\mu_n\to \mu$ such that $\mu_n\in\mathcal{M}(X)\setminus W_1$, then $\mu_n(U)\leq 4/5$ implying that $\mu(U)\leq 4/5$ and hence $\mu\in\mathcal{M}(X)\setminus W_1$. Thus, $W_1$ is open in $\mathcal{M}(X)$ and similarly $W_2$ is also open in $\mathcal{M}(X)$. Now, since the system $(\mathcal{M}(X), \widetilde{f}_{1,\infty})$ is topologically transitive, therefore there exists $k\in\mathbb{N}$ such that $\widetilde{f}_1^k(W_1)\cap W_2\ne\emptyset$. This implies that there exists $\nu\in W_1$ with $\widetilde{f}_1^k(\nu)\in W_2$ and hence $\widetilde{f}_1^k(\nu)(V_i) = \nu(f_1^{-k}(V_i))>4/5$, for $i=1, 2$. Also, as $\nu(U)>4/5$, so $f_1^k(U)\cap V_1\ne\emptyset$ and $f_1^k(U)\cap V_2\ne\emptyset$. Thus, $(X, f_{1,\infty})$ satisfies Banks's condition.
\end{proof}
Note that if the system satisfies Banks's condition, then it is topologically transitive. Consequently, we have the following result.
\begin{cor}\label{c3.1} If $(\mathcal{M}(X), \widetilde{f}_{1,\infty})$ is topologically transitive, then so is $(X, f_{1,\infty})$.
\end{cor}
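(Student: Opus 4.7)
The plan is to derive this corollary as an immediate consequence of Theorem \ref{3.4}, so the proof is essentially a one-line chain of implications with a trivial unpacking of definitions. First I would invoke Theorem \ref{3.4}: since $(\mathcal{M}(X), \widetilde{f}_{1,\infty})$ is topologically transitive, the base system $(X, f_{1,\infty})$ satisfies Banks's condition.

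Next I would verify the remark the authors stated just before the corollary, namely that Banks's condition implies topological transitivity. Given any two non-empty open subsets $U, V$ of $X$, I would apply Banks's condition to the triple $(U, V, V)$ (that is, taking $W = V$) to obtain $n \in \mathbb{N}$ with $f_1^n(U) \cap V \neq \emptyset$ and $f_1^n(U) \cap V \neq \emptyset$. This single inequality is exactly topological transitivity of $(X, f_{1,\infty})$, completing the argument.

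There is essentially no obstacle here: the content sits entirely in Theorem \ref{3.4}, and the reduction from Banks's condition to transitivity is an immediate specialization. The only thing worth being careful about is not to accidentally invoke any autonomous-specific machinery (for instance, the converse direction Banks's condition $\Rightarrow$ weakly mixing, which the paper explicitly notes can fail in the non-autonomous setting); the forward implication we need follows purely from the definitions. I would therefore present the corollary with a short two-sentence proof that names Theorem \ref{3.4} and specializes Banks's condition.
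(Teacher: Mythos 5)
Your argument is exactly the paper's: the corollary is stated immediately after the observation that Banks's condition implies topological transitivity, so it follows from Theorem \ref{3.4} precisely as you describe, and your specialization $W=V$ correctly fills in the one small step the paper leaves implicit. Correct and the same approach.
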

\begin{rmk} Converse of Corollary \ref{c3.1} is not true in general as justified by Example \ref{e4.4}
\end{rmk}
\begin{thm}\label{3.5} A non-autonomous system $(X, f_{1,\infty})$ is weakly mixing of all orders if and only if the induced system $(\mathcal{M}(X), \widetilde{f}_{1,\infty})$ is weakly mixing of all orders.
\end{thm}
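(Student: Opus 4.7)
The plan is to prove the two implications separately. The reverse direction is a quantitative strengthening of Theorem~\ref{3.4}: given non-empty open subsets $U_1,\ldots,U_m,V_1,\ldots,V_m$ of $X$, I would form $\mathcal{U}_i = \{\mu\in\mathcal{M}(X):\mu(U_i)>4/5\}$ and $\mathcal{V}_i = \{\mu\in\mathcal{M}(X):\mu(V_i)>4/5\}$. As in Theorem~\ref{3.4}, these are non-empty and open in $\mathcal{M}(X)$, and they contain the Dirac measures of points of $U_i$ and $V_i$ respectively. Applying weak mixing of order $m$ of $(\mathcal{M}(X),\widetilde{f}_{1,\infty})$ yields $n\in\mathbb{N}$ and $\mu_i\in\mathcal{U}_i$ with $\widetilde{f}_1^n(\mu_i)\in\mathcal{V}_i$, i.e.\ $\mu_i(U_i)>4/5$ and $\mu_i(f_1^{-n}(V_i))>4/5$. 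Since these two sets both have $\mu_i$-mass strictly greater than $1/2$, they must intersect, so $f_1^n(U_i)\cap V_i\ne\emptyset$ for each $i$.

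For the forward direction, let $\mathcal{U}_1,\ldots,\mathcal{U}_m,\mathcal{V}_1,\ldots,\mathcal{V}_m$ be non-empty open subsets of $\mathcal{M}(X)$. Since $\mathcal{M}_{\infty}(X)$ is dense in $\mathcal{M}(X)$, I can choose a finitely supported measure in each $\mathcal{U}_i$ and each $\mathcal{V}_i$; passing to a common denominator $k$ (the least common multiple of the numbers of atoms involved), I rewrite these as $\mu_i = \frac{1}{k}\sum_{j=1}^k\delta_{x_{i,j}}\in\mathcal{U}_i$ and $\nu_i = \frac{1}{k}\sum_{j=1}^k\delta_{y_{i,j}}\in\mathcal{V}_i$, possibly with repeated atoms. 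Pick $\epsilon>0$ small enough that the Prohorov $\epsilon$-ball around every $\mu_i$ lies in $\mathcal{U}_i$ and around every $\nu_i$ lies in $\mathcal{V}_i$. Now apply weak mixing of order $mk$ of $(X,f_{1,\infty})$ to the collection of open balls $B_d(x_{i,j},\epsilon)$ and $B_d(y_{i,j},\epsilon)$ (with $1\le i\le m$, $1\le j\le k$), yielding a single $n\in\mathbb{N}$ and points $z_{i,j}\in B_d(x_{i,j},\epsilon)$ with $f_1^n(z_{i,j})\in B_d(y_{i,j},\epsilon)$. Define $\widetilde{\mu}_i = \frac{1}{k}\sum_{j=1}^k \delta_{z_{i,j}}$. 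A direct Prohorov estimate shows that if all atoms are shifted by less than $\epsilon$, then $\mathcal{D}(\widetilde{\mu}_i,\mu_i)<\epsilon$ (for any Borel $A$, every $z_{i,j}\in A$ forces $x_{i,j}\in N(A,\epsilon)$, giving $\widetilde{\mu}_i(A)\le\mu_i(N(A,\epsilon))$, and symmetrically), and similarly $\mathcal{D}(\widetilde{f}_1^n(\widetilde{\mu}_i),\nu_i)<\epsilon$. Hence $\widetilde{\mu}_i\in\mathcal{U}_i$ and $\widetilde{f}_1^n(\widetilde{\mu}_i)\in\mathcal{V}_i$, as required.

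The main obstacle is essentially bookkeeping rather than conceptual: one must align the approximations so that all $\mu_i$ and $\nu_i$ have the same number $k$ of atoms, then invoke weak mixing of order $mk$ (which is available precisely because the hypothesis is weakly mixing of \emph{all} orders, not merely of order $m$). This is the place where ``all orders'' is genuinely used; the same argument restricted to order $m$ would not suffice because a single open set in $\mathcal{M}(X)$ already encodes many atoms.
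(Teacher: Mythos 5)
Your proposal is correct and follows essentially the same route as the paper: the converse direction mimics Theorem~\ref{3.4} with sets of measures assigning large mass to the $U_i$ and $V_i$, and the forward direction approximates by uniform atomic measures via the density of $\mathcal{M}_{\infty}(X)$ and applies weak mixing of order $mk$ to neighborhoods of the atoms. Your write-up is in fact slightly more careful than the paper's: you use the threshold $4/5>1/2$ so that the two sets of large measure must intersect (the paper's printed constant $2/5$ does not directly yield this), and you make explicit both the common-denominator step and the Prohorov estimate that the paper leaves implicit.
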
 
\begin{proof} First suppose that $(X, f_{1,\infty})$ is weakly mixing of all orders. Let $\mathcal{U}_1$, \ldots, $\mathcal{U}_k$; $\mathcal{V}_1$, \ldots, $\mathcal{V}_k$ be the collection of non-empty open subsets of $\mathcal{M}(X)$. By density of $\mathcal{M}_\infty(X)$ in $\mathcal{M}(X)$, we can choose $r\in\mathbb{N}$ such that \begin{equation}\label{M1} \mu_j = \frac{1}{r}\sum_{i=1}^r\delta_{x_i^j}\in\mathcal{U}_j \ \text{and} \ \nu_j = \frac{1}{r}\sum_{i=1}^r\delta_{y_i^j}\in\mathcal{V}_j, \ \text{for} \ j=1, 2, \ldots, k.\end{equation} Now, since $(X, f_{1,\infty})$ is weakly mixing of all orders, therefore for open neighborhoods $U_i^j$ of $x_i^j$ and $V_i^j$ of $y_i^j$, there exists $n\in\mathbb{N}$ with $f_1^n(U_i^j)\cap V_i^j\ne\emptyset$, for all $i\in\{1, 2, \ldots, r\}$ and $j\in\{1, 2, \ldots, k\}$. Let  $z_i^j\in f_1^n(U_i^j)$ and $z_i^j\in V_i^j$, that is, $f_1^{-n}(z_i^j)\in U_i^j$ and $z_i^j\in V_i^j$ and hence using \eqref{M1}, we get $\rho_j = (\sum_{i=1}^r\delta_{z_i^j})/r\in\mathcal{V}_j$. Now, since $\widetilde{f}_1^{-n}(\rho_j)(U_i^j) = \rho_j(f_1^n(U_i^j)) = (\sum_{i=1}^r\delta_{z_i^j} f_1^n(U_i^j))/r$ and $z_i^j\in f_1^n(U_i^j)$, therefore  $\widetilde{f}_1^{-n}(\rho_j) = (\sum_{i=1}^r\delta_{f_1^{-n}(z_i^j)})/r\in\mathcal{U}_j$. Thus, $\widetilde{f}_1^n(\mathcal{U}_j)\cap \mathcal{V}_j\ne\emptyset$, for every $j\in\{ 1, 2, \ldots, k\}$, which gives that $(\mathcal{M}(X), \widetilde{f}_{1,\infty})$ is weakly mixing of all orders.

Conversely, for any $k\in\mathbb{N}$, let $U_1$, \ldots, $U_k$; $V_1$, \ldots, $V_k$ be any collection of non-empty open subsets of $X$. Let $\mathcal{U}_i = \{\mu\in\mathcal{M}(X) : \mu(U_i) > 2/5\}$ and $\mathcal{V}_i = \{\mu\in\mathcal{M}(X) : \mu(V_i) > 2/5\}$, for each $i\in\{ 1, 2, \ldots, k\}$, then $\mathcal{U}_i$, $\mathcal{V}_i$ are non-empty open subsets of $\mathcal{M}(X)$. Now, as $(\mathcal{M}(X), \widetilde{f}_{1,\infty})$ is weakly mixing of all orders, so there exists $n\in\mathbb{N}$ such that $\widetilde{f}_1^n(\mathcal{U}_i)\cap \mathcal{V}_i\ne\emptyset$, for each $i = 1, 2, \ldots, k$. Therefore, there exists $\nu_i\in \mathcal{U}_i$ with $\widetilde{f}_1^n(\nu_i)\in \mathcal{V}_i$ and hence $\widetilde{f}_1^n(\nu_i)(V_i) = \nu(f_1^{-n}(V_i))>2/5$, for every $i\in\{1, 2, \ldots, k\}$. Also, $\mu(U_i)>2/5$ implies that $f_1^n(U_i)\cap V_i\ne\emptyset$, for each $i\in\{ 1, 2, \ldots, k\}$. Thus, $(X, f_{1,\infty})$ is weakly mixing of all orders.
\end{proof}
\begin{cor} If $(\mathcal{M}(X), \widetilde{f}_{1,\infty})$ is weakly  mixing of order $m$, for any $m\geq 2$, then so is $(X, f_{1,\infty})$.
\end{cor}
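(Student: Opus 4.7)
The plan is to observe that the converse direction of Theorem \ref{3.5} was not actually carried out in a way that required weak mixing of \emph{all} orders simultaneously: the argument only used weak mixing of whatever specific order the target sets in $X$ happened to have. So the proof of the corollary is essentially a word-for-word repetition of the converse half of Theorem \ref{3.5} with the integer $k$ fixed equal to $m$.

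Concretely, I would start from any $m$-tuples $U_1,\ldots,U_m$ and $V_1,\ldots,V_m$ of non-empty open subsets of $X$, and lift them to $\mathcal{M}(X)$ by setting $\mathcal{U}_i=\{\mu\in\mathcal{M}(X):\mu(U_i)>2/5\}$ and $\mathcal{V}_i=\{\mu\in\mathcal{M}(X):\mu(V_i)>2/5\}$ for $i=1,\ldots,m$. Each $\mathcal{U}_i$ and $\mathcal{V}_i$ is non-empty (it contains the Dirac measures at points of $U_i$, respectively $V_i$) and open in the weak${}^*$-topology, by the same lower-semicontinuity argument given at the start of the proof of Theorem \ref{3.4}.

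Next I would invoke the hypothesis: since $(\mathcal{M}(X),\widetilde{f}_{1,\infty})$ is weakly mixing of order $m$, there exists $n\in\mathbb{N}$ such that $\widetilde{f}_1^{n}(\mathcal{U}_i)\cap \mathcal{V}_i\ne\emptyset$ for every $i\in\{1,\ldots,m\}$. Picking $\nu_i\in\mathcal{U}_i$ with $\widetilde{f}_1^{n}(\nu_i)\in\mathcal{V}_i$, we get $\nu_i(U_i)>2/5$ and $\nu_i(f_1^{-n}(V_i))=\widetilde{f}_1^{n}(\nu_i)(V_i)>2/5$, which forces $U_i\cap f_1^{-n}(V_i)\ne\emptyset$ and hence $f_1^{n}(U_i)\cap V_i\ne\emptyset$, simultaneously for all $i=1,\ldots,m$.

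There is no real obstacle here: since the converse argument of Theorem \ref{3.5} is perfectly uniform in the number of open sets considered, restricting it to a fixed order $m$ is immediate. The only subtlety worth flagging explicitly is that weak mixing of order $m$ of the induced system is being used at a \emph{single} common $n$ for all $m$ pairs $(\mathcal{U}_i,\mathcal{V}_i)$, which is exactly the content of the definition, so nothing stronger is invoked than what the hypothesis provides.
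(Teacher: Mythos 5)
Your proposal is correct and is essentially the paper's own (implicit) argument: the corollary is stated without separate proof precisely because the converse half of Theorem \ref{3.5} is carried out order-by-order, and you have simply fixed that order at $m$. The one caveat, which you inherit verbatim from the paper, is that the threshold $2/5$ does not by itself force $U_i\cap f_1^{-n}(V_i)\ne\emptyset$ (two disjoint sets can each have measure greater than $2/5$); it should be replaced by a constant exceeding $1/2$, as in the $4/5$ used in Theorem \ref{3.4}, so that $\nu_i(U_i)+\nu_i(f_1^{-n}(V_i))>1$ yields the desired intersection.
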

For a closed unit interval $I$, by  \cite[Theorem 11]{1} and Theorem \ref{3.5}, we have the following two results. 
\begin{cor} If the non-autonomous system $(I, f_{1,\infty})$ is weakly mixing of order $3$, then $(\mathcal{M}(I), \widetilde{f}_{1,\infty})$ is weakly mixing of all orders.
\end{cor}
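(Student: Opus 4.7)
The corollary is a direct chain-together of the two cited results, so my plan is essentially bookkeeping: apply Theorem 11 of \cite{1} on the interval, then feed the output into Theorem \ref{3.5}.

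First I would invoke \cite[Theorem 11]{1}, which (in the interval setting) upgrades weak mixing of order $3$ for a non-autonomous system to weak mixing of all orders. Applying this to $(I, f_{1,\infty})$, from the hypothesis that it is weakly mixing of order $3$ I obtain that $(I, f_{1,\infty})$ is weakly mixing of order $m$ for every $m\geq 2$.

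Next I would feed this conclusion into the forward implication of Theorem \ref{3.5}, which states that weak mixing of all orders for $(X, f_{1,\infty})$ is equivalent to weak mixing of all orders for $(\mathcal{M}(X), \widetilde{f}_{1,\infty})$. Taking $X = I$, this immediately yields that $(\mathcal{M}(I), \widetilde{f}_{1,\infty})$ is weakly mixing of all orders, which is the desired conclusion.

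There is no real obstacle: both the interval-specific upgrade and the transfer to the probability-measure hyperspace are already available off the shelf. The only item worth double-checking is that the hypotheses of \cite[Theorem 11]{1} match our setting (a non-autonomous system on the compact interval $I$), so that its order-$3$-implies-all-orders conclusion legitimately applies here. Once that is verified, the proof is a two-line citation chain and nothing further is needed.
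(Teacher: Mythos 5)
Your proposal is correct and matches the paper exactly: the corollary is stated there as an immediate consequence of \cite[Theorem 11]{1} (order $3$ implies all orders on the interval) combined with Theorem \ref{3.5} (transfer of weak mixing of all orders to $(\mathcal{M}(I), \widetilde{f}_{1,\infty})$). Nothing further is needed.
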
 
Consequently, we have the following result.
\begin{cor} The non-autonomous system $(I, f_{1,\infty})$ is weakly mixing of order $3$ if and only if $(\mathcal{M}(I), \widetilde{f}_{1,\infty})$ is weakly mixing of order $3$.
\end{cor}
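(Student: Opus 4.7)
The statement is an if-and-only-if, and both directions follow immediately by combining two earlier results, so my proof plan is essentially to assemble them rather than invent anything new.

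For the forward implication, I would simply invoke the previous corollary: if $(I, f_{1,\infty})$ is weakly mixing of order $3$, then $(\mathcal{M}(I), \widetilde{f}_{1,\infty})$ is weakly mixing of all orders. Since weak mixing of all orders trivially specializes to weak mixing of order $3$, the forward direction is done in a single sentence.

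For the converse, the plan is to copy the argument used in the second half of the proof of Theorem \ref{3.5} but with $k$ pinned at $3$. Concretely, given any six non-empty open subsets $U_1, U_2, U_3, V_1, V_2, V_3$ of $I$, I would form the open sets $\mathcal{U}_i = \{\mu \in \mathcal{M}(I) : \mu(U_i) > 2/5\}$ and $\mathcal{V}_i = \{\mu \in \mathcal{M}(I) : \mu(V_i) > 2/5\}$ for $i = 1, 2, 3$; these are non-empty (take a Dirac measure at a point of $U_i$ or $V_i$) and open in $\mathcal{M}(I)$ by the lower-semicontinuity argument already used in the proof of Theorem \ref{3.4}. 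Applying weak mixing of order $3$ for $(\mathcal{M}(I), \widetilde{f}_{1,\infty})$ to the six sets $\mathcal{U}_1, \mathcal{U}_2, \mathcal{U}_3, \mathcal{V}_1, \mathcal{V}_2, \mathcal{V}_3$ yields an $n \in \mathbb{N}$ and measures $\nu_i \in \mathcal{U}_i$ with $\widetilde{f}_1^{\,n}(\nu_i) \in \mathcal{V}_i$ for each $i$. Then $\nu_i(f_1^{-n}(V_i)) > 2/5$ together with $\nu_i(U_i) > 2/5$ forces $\nu_i(U_i \cap f_1^{-n}(V_i)) > 0$, so in particular $U_i \cap f_1^{-n}(V_i) \neq \emptyset$, i.e.\ $f_1^{\,n}(U_i) \cap V_i \neq \emptyset$ for $i = 1, 2, 3$. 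This is exactly weak mixing of order $3$ for $(I, f_{1,\infty})$.

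I do not anticipate any real obstacle here: the bookkeeping of the sets $\mathcal{U}_i, \mathcal{V}_i$ is identical to the corresponding step in Theorem \ref{3.5}, and the interval structure of $I$ is not used in the converse direction (it is only used in the forward direction, via \cite[Theorem 11]{1}, which has already been absorbed into the previous corollary). The only thing worth double-checking is that the two inequalities $\nu_i(U_i) > 2/5$ and $\nu_i(f_1^{-n}(V_i)) > 2/5$ do force a non-empty intersection, but this is immediate from $\mu(A) + \mu(B) > 1 \Rightarrow A \cap B \neq \emptyset$, which is what the threshold $2/5 + 2/5$ was not quite designed to give — so I would actually prefer to argue directly (as the author does in the proof of Theorem \ref{3.5}) by noting $\nu_i(U_i) > 2/5$ gives a point of $U_i$ in the support and $\widetilde{f}_1^{\,n}(\nu_i)(V_i) > 2/5$ gives a point of $V_i$ in the image support, ensuring $f_1^{\,n}(U_i) \cap V_i \neq \emptyset$ by the same reasoning as before.
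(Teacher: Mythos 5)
Your overall architecture is exactly the paper's: the corollary is obtained by combining the preceding corollary (order $3$ on $I$ implies all orders on $\mathcal{M}(I)$, which specializes back to order $3$) with the descent statement that weak mixing of order $m$ on $\mathcal{M}(X)$ passes down to $X$, the latter being the converse half of Theorem \ref{3.5} run with $k=3$. So there is nothing new in your route, and the forward direction is fine as a one-line citation.

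The problem is the last step of your converse, and you half-spotted it yourself. From $\nu_i(U_i)>2/5$ and $\nu_i(f_1^{-n}(V_i))=\widetilde{f}_1^{\,n}(\nu_i)(V_i)>2/5$ you need $U_i\cap f_1^{-n}(V_i)\neq\emptyset$. The union bound requires the two measures to sum to more than $1$, and $2/5+2/5=4/5$ does not; you correctly noticed this, but your replacement argument --- a point of $U_i$ in $\mathrm{supp}(\nu_i)$ plus a point of $V_i$ in $\mathrm{supp}(\widetilde{f}_1^{\,n}\nu_i)$ --- does not close the gap either: take $\nu_i=\frac{1}{2}(\delta_a+\delta_b)$ with $a\in U_i$, $f_1^n(a)\notin V_i$, $b\notin U_i$, $f_1^n(b)\in V_i$; both inequalities hold and both supports meet the right sets, yet $f_1^n(U_i)\cap V_i$ may be empty. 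You need a \emph{single} point of $U_i$ that $f_1^n$ sends into $V_i$, and no amount of support-chasing produces one. The fix is trivial: define $\mathcal{U}_i$ and $\mathcal{V}_i$ with any threshold strictly greater than $1/2$ (e.g.\ $4/5$, as in Theorem \ref{3.4}); these sets are still non-empty (Dirac measures at points of $U_i$, $V_i$) and open by the same lower-semicontinuity argument, and then $\nu_i(U_i)+\nu_i(f_1^{-n}(V_i))>1$ forces $U_i\cap f_1^{-n}(V_i)\neq\emptyset$, i.e.\ $f_1^{\,n}(U_i)\cap V_i\neq\emptyset$ for $i=1,2,3$. (The same repair is needed in the paper's own converse of Theorem \ref{3.5}, which also uses $2/5$; your instinct that something was off was right --- you just resolved it in the wrong direction.)
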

Now  we prove the main result of this section.
\begin{thm}\label{t3.5} For a commutative non-autonomous system $(X, f_{1,\infty})$, the following are equivalent:
\begin{enumerate}
\item[$(1)$]
$(X, f_{1,\infty})$ is weakly mixing.
\item[$(2)$]
$(\mathcal{M}(X), \widetilde{f}_{1,\infty})$ is weakly mixing.
\item[$(3)$]
$(\mathcal{M}(X), \widetilde{f}_{1,\infty})$ is topologically transitive.
\end{enumerate}
\end{thm}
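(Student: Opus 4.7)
The plan is to prove the cyclic chain of implications $(2) \Rightarrow (3) \Rightarrow (1) \Rightarrow (2)$.

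The implication $(2) \Rightarrow (3)$ is immediate, since by definition a weakly mixing system is topologically transitive.

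For $(3) \Rightarrow (1)$, I would invoke Theorem \ref{3.4} to deduce that $(X, f_{1,\infty})$ satisfies Banks's condition, and then show that in the commutative setting Banks's condition upgrades to weak mixing. The idea, modelled on Banks's classical autonomous proof, is: given nonempty open $U_1, V_1, U_2, V_2 \subseteq X$, apply Banks's condition to the triple $V_1, U_2, V_2$ to obtain $k$ with $f_1^k(V_1) \cap U_2 \neq \emptyset$ and $f_1^k(V_1) \cap V_2 \neq \emptyset$, producing two auxiliary nonempty open sets $A = V_1 \cap f_1^{-k}(U_2)$ and $B = V_1 \cap f_1^{-k}(V_2)$; then apply Banks's condition a second time to $U_1, A, B$ to extract an integer $m$. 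Commutativity of the family $\{f_i\}$ is then used to rearrange the composed iterates so that a single index derived from $m$ and $k$ witnesses both $f_1^n(U_1) \cap V_1 \neq \emptyset$ and $f_1^n(U_2) \cap V_2 \neq \emptyset$.

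For $(1) \Rightarrow (2)$, I would first bootstrap weak mixing of order $2$ to weak mixing of all orders under the commutativity hypothesis, via a Furstenberg-style induction on the order $m$: combining weak mixing of orders $m$ and $2$ by selecting appropriate auxiliary open sets and using commutativity to compose the relevant iterates. Once weak mixing of all orders on $(X, f_{1,\infty})$ is established, Theorem \ref{3.5} directly yields weak mixing of all orders on $(\mathcal{M}(X), \widetilde{f}_{1,\infty})$, from which (2) follows as the order-$2$ instance.

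The main obstacle in both bootstrap steps is the absence of the semigroup identity $f^{n+k} = f^n \circ f^k$ that drives the classical autonomous arguments. For non-autonomous systems one only has $f_1^n \circ f_1^k$, which in general differs from $f_1^{n+k}$. Commutativity of $\{f_i\}$ at least ensures $f_1^n \circ f_1^k = f_1^k \circ f_1^n$ as maps, a genuinely weaker substitute; the delicate technical point will be verifying that this weaker algebraic structure still permits the key deductions, namely the simultaneous hitting of all target sets by a single iterate and the preservation of openness of the auxiliary sets constructed along the way.
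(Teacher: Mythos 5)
Your overall architecture — $(2)\Rightarrow(3)$ trivially, $(3)\Rightarrow(1)$ via Theorem \ref{3.4} plus ``commutative Banks implies weakly mixing'', and $(1)\Rightarrow(2)$ via bootstrapping to weak mixing of all orders and then Theorem \ref{3.5} — is exactly the paper's. For $(1)\Rightarrow(2)$ the paper simply cites the bootstrap (weak mixing plus commutativity implies weak mixing of all orders) as a known lemma from \cite{17}; your Furstenberg-style induction sketch is precisely the content of that lemma and is fine in outline. The genuine gap is in your $(3)\Rightarrow(1)$ construction: you have flipped the roles of the sets in Banks's argument, and with your choice the proof cannot close. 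You pull $U_2$ and $V_2$ back into the \emph{target} set $V_1$, forming $A = V_1\cap f_1^{-k}(U_2)$ and $B = V_1\cap f_1^{-k}(V_2)$, and then hit $A$ and $B$ from $U_1$. Unwinding what this yields: there are (generally distinct) points $x, y\in U_1$ with $f_1^m(x)\in A$ and $f_1^m(y)\in B$, i.e.\ $f_1^m(U_1)\cap V_1\ne\emptyset$, $\bigl(f_1^k\circ f_1^m\bigr)(x)\in U_2$ and $\bigl(f_1^k\circ f_1^m\bigr)(y)\in V_2$. At no point does your data exhibit a forward image of any point \emph{of} $U_2$; the set $U_2$ only ever appears as a target. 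Hence no rearrangement can produce $f_1^n(U_2)\cap V_2\ne\emptyset$ — this failure already occurs in the autonomous case, so it is not cured by commutativity. Worse, in the non-autonomous setting the only composite touching $V_2$ is $f_1^k\circ f_1^m$, and commutativity of the family gives $f_1^k\circ f_1^m = f_1^m\circ f_1^k$ but \emph{not} $f_1^{k+m}$, so this composite is not even an iterate of the system; your proposed ``single index derived from $m$ and $k$'' does not exist.

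The correct, asymmetric design (as in the paper) is: first apply Banks's condition to the triple $(U_1, U_2, V_2)$ to get $n$ with $f_1^n(U_1)\cap U_2\ne\emptyset$ and $f_1^n(U_1)\cap V_2\ne\emptyset$; then set $U = U_1\cap f_1^{-n}(U_2)$ (the preimage of $U_2$ intersected into the \emph{source} $U_1$) and $V = f_1^{-n}(V_2)$ (kept as a free-standing target); then apply Banks's condition to $(U, V_1, V)$ to get $k$. A single point $x\in U$ with $f_1^k(x)\in V$ now carries both constraints: $f_1^n(x)\in U_2$, and $f_1^n\bigl(f_1^k(x)\bigr)\in V_2$, which commutativity converts to $f_1^k\bigl(f_1^n(x)\bigr)\in V_2$ — a forward $f_1^k$-image of the point $f_1^n(x)\in U_2$. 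Together with $f_1^k(U)\cap V_1\ne\emptyset$ and $U\subseteq U_1$, the single index $k$ (the one from the \emph{second} application, applied directly to the original sets, with no index arithmetic) witnesses $f_1^k(U_1)\cap V_1\ne\emptyset$ and $f_1^k(U_2)\cap V_2\ne\emptyset$. Note that commutativity is used in exactly one place, to swap $f_1^n\circ f_1^k = f_1^k\circ f_1^n$ as maps; the proof is deliberately engineered so that the identity $f_1^{n+k} = f_1^k\circ f_1^n$, which fails non-autonomously, is never needed. Your concern about this identity was well placed — your construction founders on precisely that rock, while the paper's avoids it.
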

\begin{proof} (1)$\Rightarrow$ (2). Since $(X, f_{1,\infty})$ is weakly mixing and $f_{1,\infty}$ is commutative, therefore by \cite[Lemma 3]{17}, $f_{1,\infty}$ is weakly mixing of all orders and hence by Theorem \ref{3.5}, we get that $(\mathcal{M}(X), \widetilde{f}_{1,\infty})$ is weakly mixing of all orders and hence it is weakly mixing. Clearly, $(2)\Rightarrow (3)$. 

Finally, we prove that $(3)\Rightarrow (1)$. Let $(\mathcal{M}(X), \widetilde{f}_{1,\infty})$ be topologically transitive, then by Theorem \ref{3.4}, $(X, f_{1,\infty})$ satisfies Banks's condition. We show that for a commutative family $f_{1,\infty}$ Banks's condition implies weakly mixing. Let $U_1$, $U_2$, $V_1$ and $V_2$ be arbitrary non-empty open subsets of $X$. Since $(X, f_{1,\infty})$ satisfies Banks's condition, therefore there exists $n\in\mathbb{N}$ such that $f_1^n(U_1)\cap U_2\ne\emptyset$ and $f_1^n(U_1)\cap V_2\ne\emptyset$. Consequently, $U = U_1\cap f_1^{-n}(U_2)$ and $V = f_1^{-n}(V_2)$ are non-empty open subsets of $X$. Applying Banks's condition to $U$, $V$ and $V_1$, we get a natural number $k$ satisfying $f_1^k(U)\cap V_1\ne\emptyset$ and $f_1^k(U)\cap V\ne\emptyset$. Hence, there exists $x\in U$ such that $f_1^k(x)\in f_1^{-n}(V_2)$, which implies $f_1^n\circ f_1^k(x)\in V_2$. Now, as the family $f_{1,\infty}$ is commutative, so $f_1^k\circ f_1^n(x)\in V_2$. Also, since $f_1^n(x)\in U_2$, therefore $f_1^k(U_2)\cap V_2\ne\emptyset$ and $U\subseteq U_1$ implies that $f_1^k(U_1)\cap V_1\ne\emptyset$. Thus, $(X, f_{1,\infty})$ is weakly mixing, which completes the proof.
\end{proof}
\begin{cor}\label{c3.5} Let $(X, f_{1,\infty})$ be a non-autonomous system  such that the family $f_{1,\infty}$ is commutative. If the induced system $(\mathcal{M}(X), \widetilde{f}_{1,\infty})$ is topologically transitive and has dense periodic points, then it is Devaney chaotic.
\end{cor}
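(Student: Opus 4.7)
The plan is to reduce the corollary to showing sensitive dependence on initial conditions for $(\mathcal{M}(X), \widetilde{f}_{1,\infty})$, since topological transitivity and dense periodic points are already given by hypothesis. Devaney chaos then follows by definition.

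For the first step, I would invoke Theorem \ref{t3.5}. Since $f_{1,\infty}$ is commutative and $(\mathcal{M}(X), \widetilde{f}_{1,\infty})$ is topologically transitive, condition (3) of Theorem \ref{t3.5} is met, so condition (2) also holds: $(\mathcal{M}(X), \widetilde{f}_{1,\infty})$ is weakly mixing. This is the crucial upgrade that the commutativity hypothesis buys us.

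For the second step, I would observe that on any non-autonomous system $(Y, g_{1,\infty})$ where $Y$ has at least two distinct points, weak mixing forces sensitivity. Concretely, pick $p \neq q$ in $Y$ with $d(p,q) = \eta$, set $\delta = \eta/8$, and for any $y \in Y$ with neighborhood $\mathcal{U}$, apply weak mixing to the pairs $(\mathcal{U}, B_d(p, \delta))$ and $(\mathcal{U}, B_d(q, \delta))$ to obtain $n$ and points $y_1, y_2 \in \mathcal{U}$ with $g_1^n(y_i)$ in the respective ball; the triangle inequality forces $d(g_1^n(y_1), g_1^n(y_2)) \geq 6\delta$, so at least one of $y_1, y_2$ witnesses sensitivity at $y$ with constant $3\delta$. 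Applying this to $Y = \mathcal{M}(X)$, which contains at least two distinct Dirac measures whenever $|X|>1$ (the $|X|=1$ case being trivial), we obtain sensitivity of $(\mathcal{M}(X), \widetilde{f}_{1,\infty})$.

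The main obstacle is essentially absent: the classical autonomous proof that weak mixing implies sensitivity uses only the two-set mixing condition and the metric on the phase space, both of which carry over verbatim to the non-autonomous setting. Thus, assembling the three ingredients — topological transitivity and dense periodic points from the hypothesis, together with the sensitivity just derived — yields Devaney chaos for $(\mathcal{M}(X), \widetilde{f}_{1,\infty})$.
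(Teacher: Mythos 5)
Your proposal is correct and follows the same route the paper intends: apply the implication $(3)\Rightarrow(2)$ of Theorem \ref{t3.5} to upgrade transitivity of $(\mathcal{M}(X), \widetilde{f}_{1,\infty})$ to weak mixing, then use the standard metric argument (which indeed carries over verbatim to the non-autonomous setting, since the definition of weak mixing supplies a single $n$ for both pairs) to deduce sensitivity, and combine with the hypotheses to conclude Devaney chaos. The paper leaves the weak-mixing-implies-sensitivity step implicit, so your explicit verification of it is a welcome addition rather than a deviation.
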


Next, we discuss weakly mixing of the systems induced on hyperspaces. By \cite[Proposition 3.5, 3.10]{16} and proceeding as in the Theorem \ref{t3.5}, we get the following famous Banks's theorem for commutative non-autonomous systems.
\begin{thm}\label{3.2} For a commutative non-autonomous system $(X, f_{1,\infty})$, the following are equivalent:
\begin{enumerate}
\item[$(1)$]
$(X, f_{1,\infty})$ is weakly mixing.
\item[$(2)$]
$(\mathcal{K}(X), \overline{f}_{1,\infty})$ is weakly mixing.
\item[$(3)$]
$(\mathcal{K}(X), \overline{f}_{1,\infty})$ is topologically transitive.
\end{enumerate}
\end{thm}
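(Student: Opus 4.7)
The plan is to mirror the proof of Theorem~\ref{t3.5} almost verbatim, substituting the hyperspace $(\mathcal{K}(X),\overline{f}_{1,\infty})$ for the measure system $(\mathcal{M}(X),\widetilde{f}_{1,\infty})$ and invoking the two cited propositions from \cite{16} in place of Theorems~\ref{3.4} and~\ref{3.5}. Specifically, Proposition~3.10 of \cite{16} should play the role of our Theorem~\ref{3.5}, giving the equivalence of weak mixing of all orders on $X$ and on $\mathcal{K}(X)$, while Proposition~3.5 of \cite{16} should play the role of Theorem~\ref{3.4}, showing that transitivity of $(\mathcal{K}(X),\overline{f}_{1,\infty})$ forces Banks's condition for $(X,f_{1,\infty})$.

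For $(1)\Rightarrow(2)$, I would argue: since $(X,f_{1,\infty})$ is weakly mixing and $f_{1,\infty}$ is commutative, \cite[Lemma~3]{17} upgrades this to weak mixing of all orders; then Proposition~3.10 of \cite{16} transfers the property to $(\mathcal{K}(X),\overline{f}_{1,\infty})$, which in particular is weakly mixing (of order $2$). The step $(2)\Rightarrow(3)$ is immediate since weak mixing implies topological transitivity by definition.

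The substantive direction is $(3)\Rightarrow(1)$. Assuming $(\mathcal{K}(X),\overline{f}_{1,\infty})$ is topologically transitive, Proposition~3.5 of \cite{16} yields Banks's condition on $(X,f_{1,\infty})$. From here I would reproduce the commutative-Banks-implies-weakly-mixing argument used in Theorem~\ref{t3.5}: take arbitrary non-empty open $U_1,U_2,V_1,V_2\subseteq X$, apply Banks's condition to $U_1,U_2,V_2$ to produce $n\in\mathbb{N}$ with $f_1^n(U_1)\cap U_2\ne\emptyset$ and $f_1^n(U_1)\cap V_2\ne\emptyset$, set $U=U_1\cap f_1^{-n}(U_2)$ and $V=f_1^{-n}(V_2)$, and apply Banks's condition again to $U,V,V_1$. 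Commutativity of $f_{1,\infty}$ then lets one swap $f_1^n$ and $f_1^k$ to conclude $f_1^k(U_1)\cap V_1\ne\emptyset$ and $f_1^k(U_2)\cap V_2\ne\emptyset$, i.e., weak mixing.

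The main obstacle is simply verifying that the cited propositions from \cite{16} are stated in the exact form needed; if instead they assume a stronger or weaker hypothesis, one would need to adapt the commutativity step or supplement with the Vietoris basis argument directly (approximating Vietoris basic open sets $\langle U_1,\dots,U_k\rangle$ by points of $X$, using finite unions, and appealing to weak mixing of all orders to realize the hits). In the worst case this is a routine unpacking of Vietoris basic sets analogous to the $\mathcal{M}_\infty(X)$ density argument in Theorem~\ref{3.5}, so no genuinely new ingredient is needed beyond commutativity and the order-upgrade of \cite[Lemma~3]{17}.
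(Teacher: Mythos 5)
Your proposal matches the paper's proof essentially verbatim: the paper's entire argument for Theorem~\ref{3.2} is the one-line remark that it follows from Propositions~3.5 and 3.10 of \cite{16} ``and proceeding as in Theorem~\ref{t3.5},'' which is exactly the substitution you describe, including the commutative Banks-implies-weakly-mixing step for $(3)\Rightarrow(1)$. No gaps; the route is the same.
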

\begin{rmk} Note that  if the family $f_{1,\infty}$ is not commutative, then above result is not true as justified by  \cite[Example 3.2, 3.4]{16}.
\end{rmk} 
By Theorem \ref{3.2} and slight modification in the proof of \cite[Theorem 1.1]{9} for autonomous systems, we get the following result.
\begin{cor}\label{c3.6} If the family $f_{1,\infty}$ is commutative and $(X, f_{1,\infty})$ is the corresponding non-autonomous system, then the following are equivalent:
\begin{enumerate}
\item[$(1)$]
$(X, f_{1,\infty})$ is weakly mixing with dense small periodic set.
\item[$(2)$]
$(\mathcal{K}(X), \overline{f}_{1,\infty})$ is weakly mixing with dense small periodic set.
\item[$(3)$]
$(\mathcal{K}(X), \overline{f}_{1,\infty})$ is Devaney chaotic. 
\end{enumerate}
\end{cor}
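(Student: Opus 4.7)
The plan is to prove the equivalence via the cycle $(1)\Rightarrow(2)\Rightarrow(3)\Rightarrow(1)$. Theorem~\ref{3.2} already handles the weak mixing portions of each statement under commutativity, so the real work is in transferring the dense small periodic set between $(X,f_{1,\infty})$ and $(\mathcal{K}(X),\overline{f}_{1,\infty})$ and in upgrading such a set in $\mathcal{K}(X)$ to the genuine periodic points needed for Devaney chaos on the hyperspace.

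For $(1)\Rightarrow(2)$, I would fix a basic Vietoris open set $\mathcal{W}=\langle U_1,\ldots,U_r\rangle$ in $\mathcal{K}(X)$ and use the dense small periodic set of $(X,f_{1,\infty})$ to select closed $F_i\subseteq U_i$ and $n_i\in\mathbb{N}$ with $f_1^{n_i k}(F_i)\subseteq F_i$ for every $k\geq 1$, then take $n=\mathrm{lcm}(n_1,\ldots,n_r)$ so that $f_1^{nk}(F_i)\subseteq F_i$ for all $i$ and $k$. The candidate is $\mathcal{F}=\{K\in\mathcal{K}(X):K\subseteq\bigcup_i F_i\text{ and }K\cap F_i\neq\emptyset\text{ for each }i\}$, which is non-empty (it contains $\bigcup_i F_i$), Hausdorff-closed in $\mathcal{K}(X)$ (both defining conditions are preserved under Hausdorff limits of compacta), contained in $\mathcal{W}$, and satisfies $\overline{f}_1^{nk}(\mathcal{F})\subseteq\mathcal{F}$: indeed $f_1^{nk}(K)\subseteq\bigcup_i f_1^{nk}(F_i)\subseteq\bigcup_i F_i$, and any $x\in K\cap F_i$ produces $f_1^{nk}(x)\in f_1^{nk}(K)\cap F_i$.

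For $(3)\Rightarrow(1)$, topological transitivity of $\overline{f}_{1,\infty}$ yields weak mixing of $(X,f_{1,\infty})$ via Theorem~\ref{3.2}, and density of periodic points in $\mathcal{K}(X)$ transfers trivially: for any non-empty open $U\subseteq X$, any periodic $K\in\langle U\rangle$ provides the closed subset $F:=K\subseteq U$ with $f_1^{nk}(F)=F$ for all $k$. The remaining implication $(2)\Rightarrow(3)$ is the technical heart: weak mixing of $\overline{f}_{1,\infty}$ supplies both topological transitivity and sensitivity (the latter is a standard consequence of weak mixing in a non-trivial space), so the real task is to extract a genuinely periodic compact set $K^*$ from a given small periodic set $\mathcal{F}$ with $\overline{f}_1^{nk}(\mathcal{F})\subseteq\mathcal{F}$, following the autonomous template of \cite[Theorem 1.1]{9} adapted to the non-autonomous framework.

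The main obstacle is precisely this last extraction. In the autonomous case the iterates $(\overline{f}^{nk}(\mathcal{F}))_k$ form a decreasing sequence whose intersection supplies a periodic compact set in one stroke; in the non-autonomous setting $f_1^{n(k+1)}$ is not the composition $f_1^n\circ f_1^{nk}$, so this nestedness is lost. The commutativity hypothesis on $f_{1,\infty}$ is used precisely to reorder the compositions in $f_1^{nk}$ sufficiently to recover the monotonicity needed to produce a $K^*\in\mathcal{F}$ with $\overline{f}_1^{nk}(K^*)=K^*$ for all $k$, thereby furnishing the dense periodic points of $\overline{f}_{1,\infty}$ required to close the loop at $(2)\Rightarrow(3)$.
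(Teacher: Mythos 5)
Your implications $(1)\Rightarrow(2)$ and $(3)\Rightarrow(1)$ are correct and in fact more explicit than the paper, which disposes of the whole corollary in one line by invoking Theorem~\ref{3.2} together with ``a slight modification'' of the autonomous argument of \cite[Theorem 1.1]{9}. The construction $\mathcal{F}=\{K:K\subseteq\bigcup_iF_i,\ K\cap F_i\neq\emptyset\}$ with $n=\mathrm{lcm}(n_1,\dots,n_r)$ is sound: $\mathcal{F}$ is closed, sits inside $\langle U_1,\dots,U_r\rangle$, and satisfies $\overline{f}_1^{nk}(\mathcal{F})\subseteq\mathcal{F}$, exactly as needed.

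The gap is where you yourself locate it, in $(2)\Rightarrow(3)$, and the sentence you offer to close it does not work. You claim commutativity lets you ``reorder the compositions in $f_1^{nk}$ sufficiently to recover the monotonicity'' of the autonomous argument. But the failure of nestedness is not a matter of ordering: one has $f_1^{n(k+1)}=f_{nk+1}^{\,n}\circ f_1^{nk}$, and the block $f_{nk+1}^{\,n}=f_{n(k+1)}\circ\cdots\circ f_{nk+1}$ is built from \emph{different} maps than $f_1^{n}=f_n\circ\cdots\circ f_1$. Commutativity of the family permits permuting the factors inside a composition; it gives no relation between $f_{nk+1}^{\,n}(\mathcal{F})$ and $f_1^{n}(\mathcal{F})$, so the sets $\overline{f}_1^{nk}(\mathcal{F})$ are merely a family of closed subsets of $\mathcal{F}$ with no inclusion structure. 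In particular the autonomous device $K^*=\bigcap_{k\geq0}f_1^{nk}(F)$ can be empty (the images $f_1^{nk}(F)$ may be pairwise disjoint subsets of $F$), and nothing in your outline produces a compact set $K^*$ with $\overline{f}_1^{nk}(K^*)=K^*$ for \emph{all} $k$, which is what the paper's definition of a periodic point demands. Until you exhibit such a $K^*$ in every non-empty open subset of $\mathcal{K}(X)$, the density of periodic points required for Devaney chaos in $(3)$ is not established, and the cycle does not close. (To be fair, the paper's own one-line appeal to the autonomous proof glosses over exactly this point; but a proof of the corollary has to confront it, e.g.\ by exploiting weak mixing to steer orbits back, or by strengthening the notion of small periodic set used.)
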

To justify  that the above result is not true if the family $f_{1,\infty}$ is not commutative we recall the following example from \cite{16}.
\begin{exm}\label{e3.1} Let $I$ be the closed unit interval and $F$ be the set of elements $(a, b, c, d)$ such that $a, b, c, d \in \mathbb{Q} \cap (0,1), a < b; \  a \ne c, b \ne d \ \text{and} \ c \ne d$.  To every element $(a, b, c, d)$ in $F$, a  homeomorphism $f: I \to I$ is assigned in two cases. If $c < d$, then $f$ is a function whose graph is determined by the segments $[(0,0), (a,c)], [(a,c), (b,d)]$ and $[(b,d),(1,1)]$. If $c > d$, then $f$ is a function whose graph is determined by the segments $[(0,1), (a,c)], [(a,c), (b,d)]$ and $[(b,d),(1,0)]$. In both the cases $f(a) = c$ and $f(b) = d$. Let $\{f_n : n\in\mathbb{N}\}$ be an enumeration of functions induced by the elements of $F$. Consider the non-autonomous system $(I, f_{1,\infty})$, where $f_{1,\infty} = \{f_1, f_1^{-1}, f_2, f_2^{-1}, \ldots, f_n, f_n^{-1}, f_{n+1}, f_{n+1}^{-1}, \ldots\}$. Now, as $f_1^{2n-1} = f_n$, for each $n\in\mathbb{N}$, so $f_{1,\infty}$ is weakly mixing. Also, $f_1^{2k}(x) = x$, for every $k\in\mathbb{N}$ and each $x\in I$ implying that every point is periodic point and hence the system $(X, f_{1,\infty})$ has dense small periodic set. But  $(\mathcal{K}(X), \overline{f}_{1,\infty})$ is not topologically transitive as discussed in \cite[Example 3.2]{16}, so it cannot be Devaney chaotic.
\end{exm}

\section{Weakly Mixing and Related Properties}
In this section, we give counter examples for the results which are true in autonomous systems but not in non-autonomous systems. For a commutative finitely generated non-autonomous system, it is shown that every weakly mixing system is thickly sensitive. It is proved that on commutative periodic non-autonomous systems weakly mixing implies Devaney chaos. Throughout this section,  $f_{1,\infty}$ denote the surjective family, that is, each $f_i$ is surjective.

For autonomous systems, it is known that if $f$ is weakly mixing then $f^k$ is also weakly mixing, for every $k\geq 2$. We claim that this is not true for non-autonomous system in general as justified by the following example.
\begin{exm}\label{e4.1} Let $\Sigma_2 =\{0, 1\}^{\mathbb{Z}}$ be the collection of two-sided sequences of $0$ and $1$, endowed with product topology. Define $\sigma: \Sigma_2 \to \Sigma_2$ by $\sigma(x) = ( \ldots, x_{-2}, x_{-1}, x_0, \fbox{$x_1$}, x_2$, \ldots), where $x = ( \ldots, x_{-2}, x_{-1}, $ \fbox{$x_0$}, $ x_1, x_2 \ldots) \in \Sigma_2$, then $\sigma$ is a homeomorphism and is called the \emph{shift map} on $\Sigma_2$. For any $k\geq 2$, we consider two cases, when $k$ is even, then we consider the non-autonomous system  $(\Sigma_2, f_{1,\infty})$, where $f_{1,\infty} = \{\sigma, \sigma^{-1}, \sigma^2, \sigma^{-2}, \sigma^3, \sigma^{-3}, \ldots\}$ and when $k$ is odd, then we consider the system $(\Sigma_2, g_{1,\infty})$, with $g_{1,\infty} = \{\sigma, \sigma^{-1}$, $\underbrace{i,\ldots, i}_{\text{$(k-2)$-times}} \sigma^2, \sigma^{-2}, \underbrace{i \ldots, i}_{\text{$(k-2)$-times}}$ $\sigma^3, \sigma^{-3},\ldots\}$,
where $i : \Sigma_2\to\Sigma_2$ is the identity map.
Note that $f_1^{2l-1}= \sigma^l$, for any $l\in\mathbb{N}$; $g_1^{(r-1)k+1} = \sigma^r$, for any $r\in\mathbb{N}$ and hence both  $(X, f_{1,\infty})$ and $(\Sigma_2, g_{1,\infty})$ are weakly mixing using the fact that $\sigma$ is topological mixing. But as $f_1^{kl} = i$, for $k$ even and each $l\in\mathbb{N}$  and $g_1^{kr} = i$, for $k$ odd and each $r\in\mathbb{N}$, so $f_{1,\infty}^{[k]}$, for $k$ even and $g_{1,\infty}^{[k]}$, for $k$ odd are not topologically transitive and hence cannot be totally transitive or weakly mixing. 
 \end{exm}
\begin{rmk} The above example also shows that weakly mixing need not imply totally transitive for non-autonomous systems  in contrast to the case of autonomous systems. Note that in the above example $(\Sigma_2, f_{1,\infty})$ or $(\Sigma_2, g_{1,\infty})$ is not topological mixing and both $f_{1,\infty}$, $g_{1,\infty}$ are  commutative and surjective.
\end{rmk}
\begin{thm}If the non-autonomous system $(X, f_{1,\infty})$ is topological mixing, then $(X, f_{1,\infty}^{[k]})$ is weakly mixing, for every $k\geq 2$.
\end{thm}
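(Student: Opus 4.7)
The plan is to unwind the definitions and use the fact that the $m$-th iterate of the thinned system $f_{1,\infty}^{[k]}$ is precisely $f_1^{km}$, so that asking for weakly mixing of $f_{1,\infty}^{[k]}$ is the same as asking for two simultaneous intersections along the multiples of $k$.

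First I would fix $k\geq 2$ and take four arbitrary non-empty open subsets $U_1, U_2, V_1, V_2$ of $X$. Writing $g_{1,\infty}:=f_{1,\infty}^{[k]}$ with $g_n=f_{k(n-1)+1}^{k}$, I would verify directly from the preliminaries that
\[ g_1^{m} \;=\; g_m\circ g_{m-1}\circ\cdots\circ g_1 \;=\; f_{km}\circ f_{km-1}\circ\cdots\circ f_1 \;=\; f_1^{km}, \]
so that to establish weak mixing of $(X, g_{1,\infty})$ it suffices to produce an $m\in\mathbb{N}$ with $f_1^{km}(U_i)\cap V_i\ne\emptyset$ for $i=1,2$.

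Next I would apply the topological mixing of $(X,f_{1,\infty})$ to each of the pairs $(U_1,V_1)$ and $(U_2,V_2)$ separately. This yields integers $N_1, N_2\in\mathbb{N}$ such that $N_{f_{1,\infty}}(U_i,V_i)\supseteq [N_i,\infty)$ for $i=1,2$. Setting $N=\max\{N_1,N_2\}$ and choosing any $m\in\mathbb{N}$ with $km\geq N$, both intersections $f_1^{km}(U_i)\cap V_i$ are non-empty simultaneously, which by the identification above is exactly weak mixing of $(X, f_{1,\infty}^{[k]})$.

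There is no real obstacle here beyond bookkeeping; the only point demanding care is the iterate identity $g_1^m=f_1^{km}$, where one must carefully handle the block-reindexing encoded in $f_{1,\infty}^{[k]}=\{f_{k(n-1)+1}^{k}\}_{n=1}^{\infty}$. I would also remark in passing that the same argument, applied to $r$ pairs of open sets rather than two, shows that $(X, f_{1,\infty}^{[k]})$ is in fact weakly mixing of all orders, since only finitely many thresholds $N_1,\ldots,N_r$ need to be exceeded by a single multiple of $k$.
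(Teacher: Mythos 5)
Your proof is correct and is essentially the same as the paper's: both apply topological mixing to the two pairs separately, take the maximum of the resulting thresholds, and pick a multiple of $k$ beyond it, using the identification $g_1^m=f_1^{km}$ for the $k$-th iterate system. Your closing remark that the argument gives weak mixing of all orders for $f_{1,\infty}^{[k]}$ is a valid (and slightly stronger) observation not made in the paper.
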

\begin{proof}
Let $U_i$, $V_i$ be any  non-empty open subsets of $X$, for $i =1, 2$. By topological mixing of $f_{1,\infty}$, there exist $n_1$, $n_2\in\mathbb{N}$ such that $f_1^n(U_1)\cap V_1\ne\emptyset$, for all $n\geq n_1$ and $f_1^n(U_2)\cap V_2\ne\emptyset$, for all $n\geq n_2$. Taking $N = \text{max}\{n_1, n_2\}$, we get that $f_1^n(U_i)\cap V_i\ne\emptyset$, for all $n\geq N$ and $i =1, 2$. Now, for any $k\in\mathbb{N}$, by Archimedean property of $\mathbb{R}$, there exists $r\in\mathbb{N}$ such that $rk > N$. Therefore, $f_1^{rk}(U_i)\cap V_i\ne\emptyset$, that is, there exists $r\in\mathbb{N}$ with $(f^k_{k(r-1)+1} \circ \cdots \circ f_1^k)(U_i)\cap V_i\ne\emptyset$, for $i=1, 2$ and each $k\in\mathbb{N}$. Hence, $(X, f_{1,\infty}^{[k]})$ is weakly mixing, for every $k\geq 2$.
\end{proof}
\begin{thm}Let $(X, f_{1,\infty})$ be an $m$-period commutative non-autonomous system, then $(X, f_{1,\infty})$ is weakly mixing if and only if $f_{1, \infty}^{[k]}$ is weakly mixing, for every $k\geq 1$.
\end{thm}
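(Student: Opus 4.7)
The plan is straightforward in both directions. The forward implication is trivial: the case $k = 1$ gives $f_{1,\infty}^{[1]} = f_{1,\infty}$, so weakly mixing of $f_{1,\infty}^{[k]}$ for every $k \geq 1$ automatically includes weakly mixing of $(X, f_{1,\infty})$.

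For the nontrivial direction, suppose $(X, f_{1,\infty})$ is weakly mixing. Since $f_{1,\infty}$ is commutative, \cite[Lemma 3]{17} (already invoked in the proof of Theorem \ref{t3.5}) upgrades this to weakly mixing of every order. Fix $k \geq 1$ and non-empty open sets $U_1, V_1, U_2, V_2 \subseteq X$. Observing that $(f_{1,\infty}^{[k]})_1^n = f_{k(n-1)+1}^k \circ \cdots \circ f_{k+1}^k \circ f_1^k = f_1^{nk}$, the objective is to produce $n \in \mathbb{N}$ with $f_1^{nk}(U_i) \cap V_i \ne \emptyset$ for $i = 1, 2$.

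The central trick is to introduce the $2k$ auxiliary open sets $W_{i,j} := f_1^{-j}(V_i)$ for $i \in \{1, 2\}$ and $j \in \{0, 1, \ldots, k-1\}$; these are open by continuity and non-empty because the section-wide assumption that each $f_\ell$ is surjective makes each $f_1^j$ surjective. Applying weakly mixing of order $2k$ to the $2k$ pairs $(U_i, W_{i,j})$ yields $p \in \mathbb{N}$ such that $f_1^p(U_i) \cap f_1^{-j}(V_i) \ne \emptyset$ for every admissible $(i, j)$, which rewrites as $\{p, p+1, \ldots, p+k-1\} \subseteq N_{f_{1,\infty}}(U_i, V_i)$ for both $i$. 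Since any $k$ consecutive positive integers contain a multiple of $k$, choose $n \geq 1$ with $nk \in [p, p+k-1]$; then $f_1^{nk}(U_i) \cap V_i \ne \emptyset$ for $i = 1, 2$, establishing weakly mixing of $f_{1,\infty}^{[k]}$.

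The only delicate point is verifying non-emptiness of the $W_{i,j}$, which is exactly where the standing surjectivity hypothesis of this section earns its keep: without it, a vacuous pair would render the weakly-mixing-of-order-$2k$ conclusion inapplicable. The $m$-periodicity assumption does not enter the chain of reasoning beyond framing the result; commutativity (to invoke \cite[Lemma 3]{17}) and surjectivity are what the argument actually consumes.
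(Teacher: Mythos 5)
Your reduction of the converse to the case $k=1$ is fine, and the identity $(f_{1,\infty}^{[k]})_1^n = f_1^{nk}$ is correct, but the central ``rewrites as'' step is invalid and the gap is fatal. In a non-autonomous system $f_1^{p+j} = f_{p+1}^j \circ f_1^p$, \emph{not} $f_1^j \circ f_1^p$. From $f_1^p(U_i) \cap f_1^{-j}(V_i) \ne \emptyset$ you obtain $x \in U_i$ with $f_1^j\bigl(f_1^p(x)\bigr) \in V_i$; commutativity of the family lets you permute this to $f_1^p\bigl(f_1^j(x)\bigr) \in V_i$, but it cannot convert it into $f_1^{p+j}(x) \in V_i$, because the block $f_{p+j} \circ \cdots \circ f_{p+1}$ appearing in $f_1^{p+j}$ consists of different maps from $f_j \circ \cdots \circ f_1$, and commutativity only reorders factors, it does not exchange $f_{p+1}$ for $f_1$. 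So $p+j \in N_{f_{1,\infty}}(U_i, V_i)$ does not follow, and your run of $k$ consecutive hitting times, hence the multiple of $k$ inside it, evaporates.

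That this is not a repairable technicality is certified by the paper's own Example~\ref{e4.1}: the system $f_{1,\infty} = \{\sigma, \sigma^{-1}, \sigma^2, \sigma^{-2}, \ldots\}$ on $\Sigma_2$ is commutative (all powers of $\sigma$ commute), surjective and weakly mixing, yet $f_1^{kl} = \mathrm{id}$ for $k$ even, so $f_{1,\infty}^{[k]}$ is not even topologically transitive. Your closing remark that ``$m$-periodicity does not enter the chain of reasoning'' is therefore a proof that the chain of reasoning is broken: applied verbatim to this example it proves a false statement, and indeed the paper computes there that $N_{f_{1,\infty}}(U_i, V_i)$ consists only of odd integers, which contains no two consecutive integers, directly contradicting your claimed inclusion $\{p, p+1, \ldots, p+k-1\} \subseteq N_{f_{1,\infty}}(U_i, V_i)$ for $k \geq 2$. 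Periodicity is exactly what the paper's proof consumes: by \cite[Lemma 10]{17} the autonomous map $g = f_m \circ \cdots \circ f_1$ is weakly mixing, hence so is $g^k$; a hit $g^{rk}(U_i) \cap V_i \ne \emptyset$ translates, using $m$-periodicity to write $(f_1^m)^{rk} = f_1^{mrk}$, into $f_1^{sk}(U_i) \cap V_i \ne \emptyset$ with $s = mr$, which is precisely a hitting time of $f_{1,\infty}^{[k]}$. To salvage your preimage trick you would need $f_{p+1}^j = f_1^j$, i.e., control of $p$ modulo $m$, which weak mixing of order $2k$ gives you no way to arrange.
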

\begin{proof}
If $(X, f_{1,\infty})$ is weakly mixing, then by \cite[Lemma 10]{17}, the autonomous system $(X, g)$ is weakly mixing, where $g = f_m\circ\cdots\circ f_1$. Now, for autonomous system $(X, g)$, we have $g$ is weakly mixing if and only if $g^k$ is so, for any $k\in\mathbb{N}$. Therefore, for any  non-empty open subsets $U_i$, $V_i$  of $X$, for $i =1, 2$, there exists a natural number $r$ such that $g^{kr}(U_i)\cap V_i\ne\emptyset$, that is, $(f_1^m)^{rk}(U_i)\cap V_i\ne\emptyset$, for $i =1, 2$. Now, as the system $(X, f_{1,\infty})$ is $m$-periodic, so $(f_1^m)^{rk} = f_1^{mrk}$ and hence for $s = mr$, we get that $f_1^{sk}(U_i)\cap V_i\ne\emptyset$, that is, $(f^k_{k(s-1)+1} \circ \cdots \circ f_1^k)(U_i)\cap V_i\ne\emptyset$, for $i =1, 2$. Thus, $f_{1, \infty}^{[k]}$ is weakly mixing, for every $k\geq 1$.
\end{proof}
\begin{cor} For an $m$-periodic commutative non-autonomous system  $(X, f_{1,\infty})$, weakly mixing implies  total transitivity. 
\end{cor}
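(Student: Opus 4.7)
The plan is to chain the immediately preceding theorem with the elementary observation that weak mixing implies topological transitivity. First I would apply the previous theorem to the $m$-periodic commutative system $(X, f_{1,\infty})$: its hypothesis of weak mixing is exactly our hypothesis, so the conclusion delivers that $f_{1,\infty}^{[k]}$ is weakly mixing for every $k\geq 1$. This is the workhorse step; everything after it is bookkeeping.

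Second, I would invoke the chain weakly mixing $\Rightarrow$ Banks's condition $\Rightarrow$ topological transitivity, which the paper explicitly records after Definition 2.5 (and which one can see directly by specialising the weak-mixing condition to a single pair $U_1,V_1$). Applying this to each $k$, I get that $f_{1,\infty}^{[k]}$ is topologically transitive for every $k\geq 1$, which is precisely the definition of total transitivity for $(X, f_{1,\infty})$.

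There is no substantive obstacle: the corollary is a formal consequence of the preceding theorem together with a definitional implication, and the $m$-periodicity plus commutativity are absorbed entirely in the use of the previous theorem (they are not needed again in the last step). The only point that warrants a half-sentence of justification in the write-up is that the notion of weak mixing used on the iterated sequence $f_{1,\infty}^{[k]}$ is the same notion of weak mixing used when asserting topological transitivity of $f_{1,\infty}^{[k]}$ — both live on $X$ with the same underlying open-set intersection condition, so the implication transfers without modification.
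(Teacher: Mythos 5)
Your proposal is correct and matches the paper's intended derivation: the corollary is stated immediately after the theorem asserting that for an $m$-periodic commutative system weak mixing is equivalent to weak mixing of $f_{1,\infty}^{[k]}$ for every $k\geq 1$, and the paper leaves the final step (weak mixing implies topological transitivity for each $k$, hence total transitivity) implicit exactly as you spell it out. No gaps.
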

For autonomous systems, it is known that $(X,f)$ is weakly mixing if and only if $N_f(U, V)$ is thick, for every pair of non-empty open sets $U$ and $V$ of $X.$ This equivalence is not true in non-autonomous systems as justified by the example given below.
\begin{exm} Consider the non-autonomous system $(\Sigma_2, f_{1,\infty})$ from Example \ref{e4.1}, that is, $f_{1,\infty} = \{\sigma, \sigma^{-1}, \sigma^2, \sigma^{-2}, \sigma^3, \sigma^{-3}, \ldots\}$. Let $U_1$, $U_2$; $V_1$, $V_2$ be   non-empty open subsets of $\Sigma_2$. Since $\sigma$ is topologically mixing, therefore there exists $k\in\mathbb{N}$ such that $\sigma^n(U_i) \cap V_i\ne\emptyset$, for all $n\geq k$ and each $i=1, 2$. Now, $f_1^{2k-1} = \sigma^k$, which implies that $f_1^{2k-1}(U_i) \cap V_i\ne\emptyset$, for  each $i=1, 2$ and hence $f_{1,\infty}$ is  weakly mixing. Note that for each $i=1, 2$, we have $N_{f_{1,\infty}}(U_i, V_i) = \{2k-1, 2k+1, 2k+3, \ldots\}$, which is not thick.

Note that since the family $f_{1,\infty}$ is commutative and $(\Sigma_2, f_{1,\infty})$ is weakly mixing, therefore by Theorem \ref{t3.5}, $(\mathcal{M}(\Sigma_2), \widetilde{f}_{1,\infty})$ is weakly mixing. Also, as $f_1^{2k}(x) = x$, for every $k\in\mathbb{N}$, so $(\Sigma_2, f_{1,\infty})$ has dense  periodic points and hence $(\mathcal{K}(\Sigma_2), \overline{f}_{1,\infty})$ is weakly mixing and Devaney chaotic.
\end{exm}
In \cite{14}, we gave an example to show that weakly mixing need not imply thick sensitivity in non-autonomous systems. Now, we give the sufficient condition under which weakly mixing implies thick sensitivity in non-autonomous systems.

\begin{thm}Let $f_{1,\infty}$ be a finitely generated commutative family. If $(X, f_{1,\infty})$ is weakly mixing, then it is thickly sensitive.
\end{thm}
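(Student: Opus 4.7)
The plan is to adapt the classical autonomous proof that weak mixing implies thick sensitivity, using commutativity to localize the composition structure and finite generation to bound the number of ``tail'' maps that can appear.

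First I would fix a constant of sensitivity. Since $X$ is a compact metric space (which we may assume is non-degenerate), choose $a, b \in X$ and $\delta > 0$ with $d(a,b) > 3\delta$, and set $B_1 = B_d(a,\delta)$, $B_2 = B_d(b,\delta)$. Next, using the commutativity assumption and \cite[Lemma 3]{17} (the same ingredient used to prove Theorem \ref{t3.5}), upgrade the hypothesis that $(X,f_{1,\infty})$ is weakly mixing to weakly mixing of all orders. Given any non-empty open $U \subseteq X$ and any $p \in \mathbb{N}$, the goal is to produce $n \in \mathbb{N}$ with $\{n,n+1,\ldots,n+p\} \subseteq N_{f_{1,\infty}}(U,\delta)$.

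The crucial identity from commutativity is
$$f_1^{n+i} \;=\; h_i^{(n)} \circ f_1^n, \qquad h_i^{(n)} := f_{n+i} \circ f_{n+i-1} \circ \cdots \circ f_{n+1},$$
for $0 \leq i \leq p$. Let $F = \{g_1,\ldots,g_s\}$ be a finite generating set for $f_{1,\infty}$. By commutativity every $h_i^{(n)}$ can be rewritten as $g_1^{a_1} \circ \cdots \circ g_s^{a_s}$ with $a_1 + \cdots + a_s = i \leq p$, so the collection
$$\mathcal{H}_p \;=\; \{\, h_i^{(n)} : 0 \leq i \leq p,\ n \in \mathbb{N}\,\}$$
is finite, with cardinality $M$ bounded in terms of $p$ and $s$ only. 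Since each $f_k$ is surjective, each $h \in \mathcal{H}_p$ is surjective, hence $h^{-1}(B_j)$ is a non-empty open subset of $X$ for every $h \in \mathcal{H}_p$ and $j \in \{1,2\}$. Now apply weak mixing of order $2M$ to the $2M$ pairs $(U,\, h^{-1}(B_j))_{h \in \mathcal{H}_p,\ j \in \{1,2\}}$: this yields a single $n \in \mathbb{N}$ with $f_1^n(U) \cap h^{-1}(B_j) \neq \emptyset$ for all $h \in \mathcal{H}_p$ and $j \in \{1,2\}$.

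For each $i \in \{0,\ldots,p\}$ and $j \in \{1,2\}$, take $x_{i,j} \in U$ with $f_1^n(x_{i,j}) \in (h_i^{(n)})^{-1}(B_j)$; then $f_1^{n+i}(x_{i,j}) = h_i^{(n)}(f_1^n(x_{i,j})) \in B_j$. The triangle inequality gives $d(f_1^{n+i}(x_{i,1}), f_1^{n+i}(x_{i,2})) > 3\delta - \delta - \delta = \delta$, so $n+i \in N_{f_{1,\infty}}(U,\delta)$ for every $i = 0,1,\ldots,p$, establishing thickness. The principal obstacle is precisely the place where commutativity and finite generation enter: in the autonomous case a single map $f$ makes $f^{n+i}$ transparent, but here $f_1^{n+i}$ depends on $n$ and $i$ in an intertwined way, and without the finiteness of $\mathcal{H}_p$ (which requires both hypotheses) one cannot reduce the problem to a single finite-order weak-mixing application.
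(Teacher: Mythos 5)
Your proof is correct, but it takes a genuinely different route from the paper's. The paper argues through the induced hyperspace system: by Theorem \ref{3.2} (commutativity), $(\mathcal{K}(X),\overline{f}_{1,\infty})$ is topologically transitive, so one can pick a transitive point $S\in\langle U\rangle$; finite generation is then used to get a uniform equicontinuity estimate for all compositions of at most $k$ generators, which shows that once $\overline{f}_1^m(S)$ is Hausdorff-$\delta$-close to $X$ it stays $\epsilon$-close for the next $k$ steps, making the set of times at which $f_1^n(S)$ is $\epsilon$-dense in $X$ thick; any such time is a sensitivity time for $U$. You instead stay in $X$: commutativity (via the same Sharma--Raghav lemma the paper invokes) upgrades weak mixing to weak mixing of all orders, finite generation bounds the set $\mathcal{H}_p$ of possible tail compositions $f_{n+i}\circ\cdots\circ f_{n+1}$ of length at most $p$, and a single application of weak mixing of order $2M$ to the pairs $\bigl(U,\,h^{-1}(B_j)\bigr)$ handles every possible tail at once. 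Both arguments rely on the section's standing surjectivity assumption (the paper to get $\overline{f}_{m+1}^{\,j}(X)=X$, you to get $h^{-1}(B_j)\ne\emptyset$). Your approach buys elementarity --- no hyperspace machinery, and in fact the finiteness of $\mathcal{H}_p$ needs only finite generation, not commutativity, so commutativity enters solely through the all-orders upgrade --- at the cost of invoking mixing of arbitrarily high finite order; the paper's approach is shorter once Theorem \ref{3.2} is available and isolates the role of finite generation as an equicontinuity statement. One cosmetic remark: the identity $f_1^{n+i}=h_i^{(n)}\circ f_1^n$ is just the definition of the compositions and does not use commutativity, contrary to how you label it; and, as in the paper, one must tacitly assume $\operatorname{diam}(X)>0$, which you do flag.
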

\begin{proof}
Let $U$ be any non-empty open subset of $X$ and $0<\epsilon < \text{diam}(X)/3$. Since $(X, f_{1,\infty})$ is weakly mixing and $f_{1,\infty}$ is commutative, therefore by Theorem \ref{3.2}, $(\mathcal{K}(X)$, $\overline{f}_{1,\infty})$ is topologically transitive. Also, each $f_i$ being surjective,  we have $(\mathcal{K}(X), \overline{f}_{1,\infty})$ cannot be minimal. As $\mathcal{K}(X)$ is compact, so  $\overline{f}_{1,\infty}$ has dense transitive points in $\mathcal{K}(X)$ and hence there exists a transitive point $S$ of $\mathcal{K}(X)$ such that $S\subseteq \mathcal{U} =\langle U\rangle$. We first claim that $N(S, B_{\mathcal{H}}(X, \epsilon)) := \{n\in\mathbb{N} : \overline{f}_1^n(K)\in B_{\mathcal{H}}(X, \epsilon)\}$ is thick. If $(X, f_{1,\infty})$ is finitely generated, then so is $(\mathcal{K}(X), \overline{f}_{1,\infty})$ and since $\mathcal{K}(X)$ is compact also, therefore there exists $\delta>0$ such that for any $A$, $B\in\mathcal{K}(X)$, for each $j\in\{1, 2, \ldots, k\}$ and for all $t\geq 1$, we get that
\begin{equation}\label{W1}
\mathcal{H}(A, B)<\delta \ \Rightarrow \ \mathcal{H}(\overline{f}_t^j(A), \overline{f}_t^j(B))<\epsilon.
\end{equation}
Now, $S$ being a transitive point, we have an existence of a natural number $m$ with $\overline{f}_1^m(S)\in B_{\mathcal{H}}(X, \delta)$. Thus, by \eqref{W1}, we get that $\mathcal{H}(\overline{f}_{m+1}^j(\overline{f}_1^m(S), \overline{f}_{m+1}^j(X))<\epsilon$, that is, $\mathcal{H}(\overline{f}_1^{m+j}(S), X)<\epsilon$, for $j =1, \ldots, k$ implying that $N(S, B_{\mathcal{H}}(X, \epsilon))$ is thick. Next, we show that $N(S, B(X, \epsilon))\subseteq N_{f_{1,\infty}}(U, \epsilon)$ which will give that $N_{f_{1,\infty}}(U, \epsilon)$ is thick. Let $l\in N(S, B(X, \epsilon))$, then for $s_1$, $s_2\in S\subseteq U$ and for any $x_1$, $x_2\in X$, we have $d(f_1^l(s_i), x_i)<\epsilon$, for $i= 1, 2$. By triangle inequality, we have
\begin{align*}3\epsilon\leq d(x_1, x_2) & \leq d(x_1, f_1^l(s_1)) + d(f_1^l(s_1), f_1^l(s_2)) + d(f_1^l(s_2), x_2) \\ & < 2\epsilon + d(f_1^l(s_1), f_1^l(s_2)),\end{align*} which implies that $d(f_1^l(s_1), f_1^l(s_2))>\epsilon$ and hence $l\in N_{f_{1,\infty}}(U, \epsilon)$ which gives that $N(S$, $B(X, \epsilon))\subseteq N_{f_{1,\infty}}(U, \epsilon)$.
\end{proof}
\begin{exm} Consider a $3$-periodic non-autonomous system $(X, f_{1,\infty})$, where $f_1 = \sigma$, $f_2 = \sigma^{-2}$, $f_3 = \sigma^2$ and $X = \Sigma_2$, that is, 
$f_{1,\infty} = \{\sigma, \sigma^{-2}, \sigma^2, \sigma, \sigma^{-2}, \sigma^2, \ldots\}$, where $\sigma$ is the shift map as defined in Example \ref{e4.1}. Then $(\Sigma_2, f_{1,\infty})$ is finitely generated and commutative. Also, since $f_3\circ f_2\circ f_1 = \sigma$ and $\sigma$ is weakly mixing, therefore $(\Sigma_2, f_{1,\infty})$ is also weakly mixing. Hence, we get that $(\Sigma_2, f_{1,\infty})$ is thickly sensitive and totally transitive.
\end{exm} 
\begin{lem}\label{l4.5} If $(X, f_{1,\infty})$ is a non-autonomous system such that each $f_i$ is  an isometry with $X$ infinite, then $(X, f_{1,\infty})$ does not satisfy Banks's condition.
\end{lem}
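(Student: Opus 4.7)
The plan is to exploit the fact that isometries preserve distances, so they cannot map a small set onto a set straddling two widely separated regions. Banks's condition, however, demands exactly that: a single image $f_1^n(U)$ must meet two arbitrary open sets $V$ and $W$. By choosing $V$ and $W$ far apart and $U$ small, we can contradict this.

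First, I would note that the composition of isometries is an isometry, so $f_1^n = f_n \circ \cdots \circ f_1$ is an isometry for every $n \geq 1$. Next, since $X$ is infinite and metric, it contains at least two distinct points $p, q \in X$; set $r = d(p,q) > 0$. Define three non-empty open subsets of $X$ by
\[
U = V = B_d(p, r/8), \qquad W = B_d(q, r/8).
\]
A quick triangle-inequality check shows that for any $v \in V$ and $w \in W$,
\[
d(v,w) \geq d(p,q) - d(p,v) - d(q,w) \geq r - r/8 - r/8 = 3r/4,
\]
so $d(V,W) \geq 3r/4$, while on the other hand $\mathrm{diam}(U) \leq r/4$.

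Now suppose, for contradiction, that $(X, f_{1,\infty})$ satisfies Banks's condition. Then there is some $n \in \mathbb{N}$ with $f_1^n(U) \cap V \neq \emptyset$ and $f_1^n(U) \cap W \neq \emptyset$. Pick $a, b \in U$ with $f_1^n(a) \in V$ and $f_1^n(b) \in W$. Since $f_1^n$ is an isometry,
\[
d(f_1^n(a), f_1^n(b)) = d(a,b) \leq \mathrm{diam}(U) \leq r/4.
\]
But simultaneously $d(f_1^n(a), f_1^n(b)) \geq d(V,W) \geq 3r/4$, a contradiction. Hence Banks's condition fails.

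There is really no serious obstacle here; the entire argument rests on the single observation that each $f_1^n$ preserves distances. The only mild care needed is to ensure the existence of two distinct points in $X$, which is immediate from $X$ being infinite, and to pick the radii so that $2\cdot(r/8) + 2\cdot(r/8) < r$, i.e., $V$ and $W$ are separated by strictly more than $\mathrm{diam}(U)$.
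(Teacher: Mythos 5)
Your proof is correct and follows essentially the same approach as the paper: both arguments use the fact that $f_1^n$ is an isometry to bound $\mathrm{diam}(f_1^n(U))$ and then contradict the separation of the two target sets via the triangle inequality. The only cosmetic difference is that you use two base points with $U=V$, whereas the paper picks three distinct points and three disjoint balls; this changes nothing of substance.
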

\begin{proof}
Let $x$, $y$, $z\in X$ be three distinct points and $\delta := \min$ $\{d(x, y), d(y, z), d(z, x)\}/4$. Let $U$, $V_1$, $V_2$ be open balls of radius $\delta$ around $x$, $y$ and $z$, respectively. Since diam$(U)\leq 2\delta$ and each $f_i$ is an isometry, therefore diam$(f_1^n(U))\leq 2\delta$, for each $n\in\mathbb{N}$. Assume that $f_{1,\infty}$ satisfies Banks's condition, then there exists $k\in\mathbb{N}$ with $f_1^k(U)\cap V_1\ne\emptyset$ and $f_1^k(U)\cap V_2\ne\emptyset$. Therefore, there exist $p$ and $q$ such that $p$, $q\in f_1^n(U)$ and $d(y, p)<\delta$, $d(z, q)<\delta$. Now, as diam$(f_1^n(U))\leq 2\delta$, so $d(p, q)\leq 2\delta$. Also, by triangle inequality, we have 
\begin{align*}
4\delta \leq d(y, z) & \leq d(y, p) + d(p, q) + d(q, z) < 2\delta + d(p, q),
\end{align*}
implying that $d(p, q) > 2\delta$, which is a contradiction. Thus, $(X, f_{1,\infty})$ cannot satisfy Banks's condition.
\end{proof}
For autonomous dynamical systems it is known that if the system is topologically transitive and has dense prime period periodic points then it is weakly mixing \cite{13}. We assert that this result is not true for non-autonomous system and give the following example to support our claim.
\begin{exm}\label{e4.4} Let $S^1$ be the unit circle and $\alpha$ be an irrational number. Let $f$, $g : S^1\to S^1$ be defined as $f(e^{i\theta}) = e^{i(\theta + 2\pi\alpha)}$ and $g(e^{i\theta}) = e^{i(\theta - 2\pi\alpha)}$, then $f$ and $g$ are irrational rotations on $S^1$ which are minimal and hence topologically transitive. Consider the non-autonomous system $(S^1, f_{1,\infty})$, where $f_{2n-1}(e^{i\theta}) = f^n(e^{i\theta}) $ and $f_{2n}(e^{i\theta}) = g^n(e^{i\theta})$, that is, \[f_{1,\infty} = \{e^{i(\theta + 2\pi\alpha)}, e^{i(\theta - 2\pi\alpha)}, e^{i(\theta + 4\pi\alpha)}, e^{i(\theta - 4\pi\alpha)}, \ldots, e^{i(\theta + 2n\pi\alpha)}, e^{i(\theta - 2n\pi\alpha)}, \ldots\}.\] Note that each $f_i$ is an isometry and hence by Lemma \ref{l4.5}, $(S^1, f_{1,\infty})$ is not weakly mixing. Now, since $f_1^{2k}(e^{i\theta}) = e^{i\theta}$, for every $k\in\mathbb{N}$, therefore each point of $S^1$ is periodic point of prime period $2$. Thus, $(S^1, f_{1,\infty})$ has dense prime period periodic points. Also, $f_1^{2m-1} = f^m$, for every $m\in\mathbb{N}$ and $f$ is topologically transitive, therefore $(S^1, f_{1,\infty})$ is also topologically transitive. 

Now, let   $e^{i\theta}$, $e^{i\phi}\in S^1$ be any two distinct points and $\mu = \delta_{e^{i\theta}}$, $\nu = \delta_{e^{i\phi}}\in\mathcal{M}(S^1)$.  If $U$ and $V$ are sufficiently small neighborhoods of $\mu$ and $\nu$, respectively, then $\widetilde{f}^n(U)\cap V = \emptyset$, for each $n\in\mathbb{N}$ \cite{3}. For $l = 2n-1$, we have $f_1^{l} = f^n$ and for $l = 2n$, we have $f_1^l = i_{S^1}$, where $i_{S^1}$ is the identity map on $S^1$, and hence $\widetilde{f}_1^n(U)\cap V = \emptyset$, for every $n\in\mathbb{N}$. Thus, $(\mathcal{M}(S^1), \widetilde{f}_{1,\infty})$ cannot be topologically transitive. 
\end{exm}
\begin{rmk} We have the following conclusions.
\begin{enumerate}
\item[(1)]
Note that in the above Example \ref{e4.4}, the system is also not totally transitive. Therefore, if the non-autonomous system is topologically transitive and has dense prime period points then it is not even totally transitive.
\item[(2)]
Above example also shows that the converse of the statement in Corollary \ref{c3.1} need not be true for non-autonomous discrete systems.
\item[(3)]
For an autonomous system, it is known that every periodic orbit is finite or if the system is minimal then it has no periodic point. However, this need not be true for non-autonomous systems. In the above Example \ref{e4.4} every point is a periodic point and since $\mathcal{O}_{f_{1,\infty}}(e^{i\theta}) = \mathcal{O}_f(e^{i\theta})$, for every $x\in S^1$, therefore $(S^1, f_{1,\infty})$ is minimal as $(S^1, f)$ is minimal. Thus, we have a non-autonomous system which is minimal and has each point as periodic point.
\item[(4)]
The above example also shows that  the non-autonomous system which is topologically transitive having dense set of periodic points need not imply sensitivity even if the family $f_{1,\infty}$ is commutative. Whereas for the commutative induced non-autonomous systems, we have proved that topological transitivity and dense periodicity imply sensitivity, see Corollary \ref{c3.5} and Corollary \ref{c3.6}.
\end{enumerate}
\end{rmk}

We end this section by giving  sufficient conditions  under which a non-autonomous system becomes Devaney chaotic.
\begin{thm} Let $(J, f_{1,\infty})$ be a commutative $m$-periodic non-autonomous system, where $J$ is an interval. If the system $(J, f_{1,\infty})$ is weakly mixing, then it is Devaney chaotic.
\end{thm}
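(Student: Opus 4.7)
The plan is to verify each of the three clauses of Devaney chaos separately. Topological transitivity is immediate from weak mixing (apply the defining condition with $U_1 = U$, $V_1 = V$, $U_2 = V_2 = J$). For sensitivity I would invoke the theorem proved earlier in this section: every commutative finitely generated weakly mixing non-autonomous system is thickly sensitive, hence sensitive. Since every $m$-periodic system is automatically finitely generated, witnessed by $\{f_1,\ldots,f_m\}$, and $f_{1,\infty}$ is commutative and weakly mixing by hypothesis, this applies directly to $(J, f_{1,\infty})$ and yields a dense set of sensitive behavior.

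The core of the argument is the density of periodic points. My approach is to pass to the autonomous composition $g := f_m \circ \cdots \circ f_1 = f_1^m$. By \cite[Lemma 10]{17}, commutativity together with $m$-periodicity makes weak mixing of $f_{1,\infty}$ equivalent to weak mixing of the single map $g$, so $(J, g)$ is a weakly mixing autonomous interval map, in particular topologically transitive. I would then invoke the classical Vellekoop--Berglund theorem, which asserts that any topologically transitive continuous self-map of an interval has a dense set of periodic points. This furnishes a dense subset $P \subseteq J$ such that every $x \in P$ satisfies $g^{p(x)}(x) = x$ for some $p(x) \in \mathbb{N}$. The final step is to check that each such $x$ is periodic for $f_{1,\infty}$ in the sense of the preliminaries: using $m$-periodicity of the family, $f_1^{mp(x)k} = g^{p(x)k}$, so $f_1^{mp(x)k}(x) = g^{p(x)k}(x) = x$ for every $k \in \mathbb{N}$, confirming that $x$ is periodic for $(J, f_{1,\infty})$ with period witness $N = mp(x)$. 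Hence $P$ is contained in the periodic set of $f_{1,\infty}$ and is dense in $J$.

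The only subtle point I foresee is ensuring that the translation between periodic points of $g$ and periodic points of the non-autonomous system is airtight, because the non-autonomous definition demands $f_1^{Nk}(x)=x$ for \emph{all} $k\geq 1$ and not just for a single $k$. This is exactly what the autonomous periodicity $g^{p(x)k}(x)=x$ delivers once combined with the $m$-periodicity of the family, so there is no hidden gap. All three clauses thus stitch together without any delicate fresh interval-theoretic argument: the substantive step (Vellekoop--Berglund) is already available in the autonomous literature, and the reduction to $g$ via \cite[Lemma 10]{17} is clean because commutativity and periodicity are both in force.
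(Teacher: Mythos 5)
Your proposal is correct, and its core --- passing to $g = f_m\circ\cdots\circ f_1$ via \cite[Lemma 10]{17}, invoking the autonomous fact that transitivity of an interval map forces dense periodic points, and pulling those points back through $f_1^{mp(x)k}=g^{p(x)k}$ --- is exactly the paper's route for density of periodic points (the paper outsources the pull-back to \cite[Lemma 1]{17} where you do it by hand, but the computation is the same). Where you genuinely diverge is sensitivity: the paper stays with $g$, uses that transitivity on intervals implies Devaney chaos to get sensitivity of $(J,g)$, and transfers it back via $f_1^{mn}=(f_1^m)^n=g^n$; you instead cite the earlier theorem of this section that a commutative, finitely generated, weakly mixing system is thickly sensitive, noting that $m$-periodic implies finitely generated. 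Your route buys a stronger conclusion (thick sensitivity, and one valid on any compact metric space, not just intervals) at the cost of importing that theorem's machinery on $\mathcal{K}(X)$ and its reliance on the section's standing assumption that each $f_i$ is surjective --- which is in force here, so the appeal is legitimate, but it is worth stating explicitly since surjectivity is used essentially in that theorem's proof (non-minimality of the induced hyperspace system). The paper's sensitivity argument is more elementary and self-contained; both are sound.
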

\begin{proof}
Let $(J, f_{1,\infty})$ be weakly mixing, then by \cite[Lemma 10]{17}, the autonomous system $(J, g)$ is weakly mixing, where $g = f_m\circ\cdots\circ f_1$. Now, for autonomous systems we have topological transitivity on intervals imply Devaney chaos, therefore $(J, g)$ has dense set of periodic points and is sensitive. Hence, by \cite[Lemma 1]{17}, we get that $(J, f_{1,\infty})$ has dense set of periodic points. Let $x\in J$ be arbitrary and $U$ be an open neighborhood of $x$. Since $(J, g)$ is sensitive, therefore there exists $\delta>0$ such that for above $x$, there exist  $y\in U$ and $n\in\mathbb{N}$ such that $d(g^n(x), g^n(y))>\delta$, that is, $d((f_1^m)^n(x), (f_1^m)^n(y))>\delta$. Now, since the system $(J, f_{1,\infty})$ is $m$-periodic, therefore $f_1^{mn} = (f_1^m)^n$. Hence, we get that $d(f_1^{nm}(x), f_1^{nm}(y))>\delta$. Thus, there exists $r = nm\in\mathbb{N}$ such that $d(f_1^r(x), f_1^r(y))>\delta$ implying that $(J, f_{1,\infty})$ is sensitive. Therefore, $(J, f_{1,\infty})$ is Devaney chaotic.
\end{proof}
By \cite[Lemma 1]{17}, \cite[Lemma 2]{17} and sensitivity from above theorem, we get the following results.
\begin{cor} Let $(X, f_{1,\infty})$ be an $m$-periodic non-autonomous system and $g = f_m\circ\cdots\circ f_1$. If the autonomous system $(X, g)$ is topologically transitive and has dense set of periodic points, then $(X, f_{1,\infty})$ is  Devaney chaotic.
\end{cor}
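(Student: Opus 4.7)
The plan is to verify each of the three ingredients of Devaney chaos for $(X,f_{1,\infty})$ by transferring the corresponding property from the autonomous system $(X,g)$, using the $m$-periodicity relation $f_1^{mn} = g^n$ together with the cited lemmas from \cite{17}.

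First, since $(X,g)$ is topologically transitive with dense periodic points and (implicitly) $X$ is infinite, the classical Banks--Brooks--Cairns--Davis--Stacey theorem gives that $(X,g)$ is sensitive. So $(X,g)$ is already Devaney chaotic in the autonomous sense. Now I would transfer each of the three properties separately. For transitivity: by \cite[Lemma 2]{17}, topological transitivity of $g = f_m\circ\cdots\circ f_1$ implies topological transitivity of the $m$-periodic system $(X,f_{1,\infty})$, so condition (i) of Devaney chaos holds. For dense periodic points: by \cite[Lemma 1]{17}, the set of periodic points of $(X,g)$ coincides (or is at least dense within) the set of periodic points of $(X,f_{1,\infty})$, so density of periodic points for $g$ yields density of periodic points for $f_{1,\infty}$, giving condition (ii).

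For sensitivity (condition (iii)), I would reuse verbatim the argument from the preceding theorem. Let $\delta>0$ be a constant of sensitivity for $(X,g)$. Fix $x\in X$ and an open neighborhood $U$ of $x$; choose $y\in U$ and $n\in\mathbb{N}$ with $d(g^n(x),g^n(y))>\delta$. Since the system is $m$-periodic we have $(f_1^m)^n = f_1^{mn}$, hence
\[
d(f_1^{mn}(x), f_1^{mn}(y)) = d(g^n(x), g^n(y)) > \delta,
\]
so $r=mn\in\mathbb{N}$ witnesses sensitivity of $(X,f_{1,\infty})$ at $x$ with the same constant $\delta$.

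Combining the three steps concludes that $(X,f_{1,\infty})$ is topologically transitive, has dense periodic points, and is sensitive, hence Devaney chaotic. The only potentially delicate point is invoking Banks' theorem to get sensitivity of $(X,g)$: this requires $X$ to be infinite (otherwise sensitivity is vacuous or impossible), but this is standard context in the paper since the whole discussion is trivial on finite $X$. Apart from that, the proof is essentially a direct concatenation of \cite[Lemma 1]{17}, \cite[Lemma 2]{17}, and the sensitivity transfer displayed above.
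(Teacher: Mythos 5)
Your proposal is correct and follows essentially the same route as the paper: transfer topological transitivity and dense periodic points to $(X,f_{1,\infty})$ via \cite[Lemma 1]{17} and \cite[Lemma 2]{17}, obtain sensitivity of $(X,g)$ from the autonomous Banks--Brooks--Cairns--Davis--Stacey theorem, and push it to the non-autonomous system through the identity $f_1^{mn}=g^n$ exactly as in the preceding theorem's proof. The only addition is your explicit caveat that $X$ must be infinite for Banks' theorem, which the paper leaves implicit.
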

\begin{cor} Let $(J, f_{1,\infty})$ be an $m$-periodic non-autonomous system and $g = f_m\circ\cdots\circ f_1$, where $J$ is any interval. If the autonomous system $(J, g)$ is topologically transitive, then $(J, f_{1,\infty})$ is  Devaney chaotic.
\end{cor}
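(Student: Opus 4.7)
The plan is to reduce this corollary to the immediately preceding one by invoking the classical dichotomy for interval maps. The preceding corollary already guarantees Devaney chaos for the non-autonomous system $(X, f_{1,\infty})$ from topological transitivity together with dense periodic points of the induced autonomous map $g = f_m \circ \cdots \circ f_1$. So all that is really needed here is the extra input that, when the phase space is an interval, topological transitivity of an autonomous map already forces dense periodic points.

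First I would note that the classical theorem of Vellekoop and Berglund (for continuous self-maps of an interval, topological transitivity implies Devaney chaos; in particular, dense periodicity and sensitivity) applies to the autonomous system $(J, g)$, since $g$ is a continuous self-map of the interval $J$ and is assumed topologically transitive. This immediately yields a dense set of periodic points for $g$ in $J$.

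Next, I would apply the preceding corollary (the $m$-periodic version), whose hypotheses are now exactly satisfied: $(J, f_{1,\infty})$ is $m$-periodic with $g = f_m \circ \cdots \circ f_1$, and $(J, g)$ is topologically transitive with dense set of periodic points. The conclusion of that corollary is that $(J, f_{1,\infty})$ is Devaney chaotic, which is precisely what we want. Concretely, one lifts periodic points of $g$ to periodic points of $f_{1,\infty}$ via \cite[Lemma~1]{17}, and sensitivity is transferred using the identity $(f_1^m)^n = f_1^{mn}$ coming from $m$-periodicity, exactly as in the proof of the theorem on weakly mixing implying Devaney chaos above.

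The only genuine obstacle is to make sure the interval statement is invoked at full strength (topological transitivity of a continuous interval map implies dense periodic points), since this is not proved in the paper and must be cited from the literature; once that is in place, the proof is a two-line reduction to the preceding corollary, with no further computation required.
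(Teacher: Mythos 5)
Your proposal is correct and follows essentially the same route as the paper: the paper also obtains dense periodic points and sensitivity for $(J,g)$ from the Vellekoop--Berglund fact that transitivity of a continuous interval map implies Devaney chaos, and then transfers transitivity, dense periodicity and sensitivity to $(J,f_{1,\infty})$ via \cite[Lemma 1, Lemma 2]{17} and the identity $f_1^{mn}=(f_1^m)^n$, exactly as in the preceding theorem and corollary.
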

\section{Sensitivity on the unit interval [0, 1]}
In this section, we first give examples to justify that for non-autonomous systems in general sensitivity need not be equivalent to cofinite sensitivity and other stronger forms of sensitivity. Sufficient condition is given under which sensitivity is equivalent to some stronger forms of sensitivity in non-autonomous systems.

For autonomous dynamical systems, Moothathu proved that on closed unit interval $I$ sensitivity is equivalent to the strongest form of sensitivity, namely cofinite sensitivity \cite{12}. Hence, all other stronger forms of sensitivities are equivalent to sensitivity on $I$ in autonomous systems. We assert that for non-autonomous system this need not be true. 

The following example shows that sensitivity is not equivalent to cofinite and thick sensitivity for non-autonomous systems in general.
\begin{exm} Let $I$ be the closed unit interval and $\{f_n : n\in\mathbb{N}\}$ be an enumeration of functions induced by the elements of $F$ as considered in Example \ref{e3.1}. Let  $(I, f_{1,\infty})$ be the non-autonomous, where $f_{1,\infty} = \{f_1, f_1^{-1}, i, f_2, f_2^{-1}, i, \ldots, f_n, f_n^{-1}, i,  f_{n+1}, f_{n+1}^{-1}, i \ldots\}$, here $i: I\to I$ is the identity map. Since $f_1^{3r-2} = f_r$, for each $r\in\mathbb{N}$, therefore as before $(I, f_{1,\infty})$ is weakly mixing and hence sensitive dependence on initial conditions. Note that as $f_1^{3k}(x) = x$, for every $x\in I$, so $(I, f_{1,\infty})$ cannot be cofinitely sensitive or thickly sensitive.
\end{exm}
The following example shows that sensitivity is not equivalent to syndetic and ergodic sensitivity for non-autonomous systems in general. 
\begin{exm}\label{se} Let $I$ be the closed unit interval and $\{f_n : n\in\mathbb{N}\}$ be an enumeration of functions induced by the elements of $F$ as considered in Example \ref{e3.1}. Let  $(I, f_{1,\infty})$ be the non-autonomous, where 
\[f_{1,\infty} = \{f_1, f_1^{-1}, \underbrace{i,\ldots, i}_{\text{10-times}}, f_2, f_2^{-1}, \underbrace{i,\ldots, i}_{\text{$10^2$-times}}, f_3, f_3^{-1},\ldots, \underbrace{i,\ldots, i}_{\text{$10^{n-1}$-times}}, f_n, f_n^{-1}, \dots\},\] where $i: I\to I$ is the identity map. Since $f_1^k = f_n$, for every $n\in\mathbb{N}$, where $k = 10 + 10^2 +\cdots + 10^{n-1}+ 2n-1$, therefore $(I, f_{1,\infty})$ is weakly mixing and hence sensitive. Thus, for any $x\in I$ and any open neighborhood $U$ of $x$, there is $\delta>0$ such that $N_{f_{1,\infty}}(U, \delta)\ne\emptyset$ and is infinite. Let $N_{f_{1,\infty}}(U, \delta) = \{n_1, n_2, n_3, n_4, \ldots\}$, then $n_2-n_1< n_3-n_2 < n_4 - n_3 < \cdots$, therefore $N_{f_{1,\infty}}(U, \delta)$ has large unbounded gaps and hence $(I, f_{1,\infty})$ cannot be syndetically sensitive. Also, note that $d(N_{f_{1,\infty}}(U, \delta)) = 0$ implying that $(I, f_{1,\infty})$ is not ergodically sensitive.
\end{exm}
We now give  sufficient conditions under which some of the stronger forms of sensitivity become equivalent to sensitivity on $I$ in non-autonomous systems.
\begin{thm}\label{4.3} Let $(I, f_{1,\infty})$ be an $m$-periodic non-autonomous system. If a non-autonomous system $(I, f_{1,\infty})$ is sensitive, then it is syndetically sensitive, ergodic sensitive, collectively sensitive and multi-sensitive.
\end{thm}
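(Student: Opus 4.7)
The plan is to exploit the $m$-periodicity of the system to reduce the statement to an autonomous system on $I$, where Moothathu's strong result on the unit interval is available, and then pull the information back along the arithmetic subsequence $\{qm\}_q$ of iterates. Concretely, set $g = f_m\circ\cdots\circ f_1$; I will show that $(I,g)$ is sensitive, invoke \cite{12} to upgrade this to cofinite sensitivity of $g$, and then transport it to $f_{1,\infty}$ via the identity $f_1^{qm}=g^q$.

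For the passage from $f_{1,\infty}$ to $g$, I first record that the $m$-periodicity gives $f_1^{qm+r}=f_1^r\circ g^q$ for every $q\geq 0$ and $0\leq r<m$. Since the finitely many maps $f_1^0,f_1^1,\ldots,f_1^{m-1}$ are continuous on the compact space $I$, given the sensitivity constant $\delta>0$ of $(I,f_{1,\infty})$ I can pick a single $\eta>0$ such that $d(a,b)<\eta$ implies $d(f_1^r(a),f_1^r(b))<\delta$ for every $r\in\{0,1,\ldots,m-1\}$. Now, given any $x\in I$ and open neighborhood $U$ of $x$, sensitivity of $f_{1,\infty}$ supplies $y\in U$ and $n=qm+r$ with $d(f_1^n(x),f_1^n(y))>\delta$; if $d(g^q(x),g^q(y))$ were smaller than $\eta$, the choice of $\eta$ would force $d(f_1^n(x),f_1^n(y))<\delta$, a contradiction. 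Thus $(I,g)$ is sensitive with constant $\eta$.

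Next, since $(I,g)$ is an autonomous dynamical system on the closed unit interval that is sensitive, Moothathu's theorem \cite{12} yields an $\eta'>0$ and, for every non-empty open $U\subseteq I$, an $N_U\in\mathbb{N}$ with $N_g(U,\eta')\supseteq[N_U,\infty)\cap\mathbb{N}$. Using $f_1^{qm}=g^q$ this translates directly into
\[
N_{f_{1,\infty}}(U,\eta')\supseteq\{qm:q\geq N_U\}.
\]
All four conclusions are now elementary. The right-hand set has gaps at most $m$ beyond $N_Um$, so it is syndetic, giving syndetic sensitivity; it has upper density $1/m>0$, giving ergodic sensitivity; for any finite family $V_1,\ldots,V_k$ of non-empty open sets, every $qm$ with $q\geq\max_i N_{V_i}$ lies in $\bigcap_i N_{f_{1,\infty}}(V_i,\eta')$, giving multi-sensitivity; finally, collective sensitivity follows from ordinary sensitivity of $(I,f_{1,\infty})$ by applying it to $B(x_1,\epsilon)$ to obtain a suitable $y_1$, setting $y_i=x_i$ for $i>1$, and taking $i_0=1$.

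The main obstacle, as I see it, is the uniform-continuity argument in the reduction from $f_{1,\infty}$-sensitivity to $g$-sensitivity: it depends on being able to choose a single modulus $\eta$ that works simultaneously for all $m$ residue maps $f_1^0,\ldots,f_1^{m-1}$. After this step, Moothathu's theorem does the heavy lifting, and all four enhancements of sensitivity fall out from the simple fact that $m\mathbb{N}$ is both syndetic and of positive upper density in $\mathbb{N}$.
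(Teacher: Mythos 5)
Your argument is correct in substance, and its first half coincides with the paper's: both reduce sensitivity of $(I,f_{1,\infty})$ to sensitivity of the autonomous map $g=f_m\circ\cdots\circ f_1$ by choosing a single uniform-continuity modulus valid for the finitely many residue maps, and then invoke Moothathu's cofinite-sensitivity theorem on the interval. (The paper writes the residue map as $f_{rm+1}^{q}$, which equals $f_1^{q}$ by $m$-periodicity, so this is the same computation; it also notes explicitly that the exponent $n$ produced by sensitivity must exceed $m$, so that the quotient $q$ is at least $1$ --- you should likewise shrink $U$ into $B(x,\eta)$ at the outset to rule out $q=0$.) Where you genuinely diverge is the return trip. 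The paper transfers each of the four properties separately from $g$ back to $f_{1,\infty}$: syndetic from syndetic, ergodic from ergodic, collective from collective, and multi-sensitivity by citing \cite[Theorem 3.2]{14}. You instead use the full strength of cofinite sensitivity of $g$ once, obtain the single containment $N_{f_{1,\infty}}(U,\eta')\supseteq\{qm:q\ge N_U\}$, and read off syndeticity, positive upper density $1/m$, and the nonemptiness of finite intersections directly from the structure of the arithmetic progression $m\mathbb{N}$. This is more economical, self-contained (no appeal to \cite{14}), and makes clear that all four conclusions flow from one source.

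The one step to tighten is collective sensitivity. With $y_i=x_i$ for $i>1$ and $i_0=1$, the disjunction required for an index $i>1$ reads $d(f_1^n(x_i),f_1^n(y_1))>\delta$ or $d(f_1^n(x_i),f_1^n(x_1))>\delta$, and this does not follow immediately from $d(f_1^n(x_1),f_1^n(y_1))>\delta$; you need the triangle inequality $d(f_1^n(x_1),f_1^n(y_1))\le d(f_1^n(x_1),f_1^n(x_i))+d(f_1^n(x_i),f_1^n(y_1))$ and must pass to the constant $\delta/2$. Either add that line, or transfer collective sensitivity from $(I,g)$ along $f_1^{nm}=g^{n}$ as the paper does.
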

\begin{proof} We first claim that for any compact metric space $X$, if $(X, f_{1,\infty})$ is sensitive, then the corresponding autonomous system $(X, g)$ is sensitive, where $g = f_m\circ\cdots\circ f_1$. Let $(X, f_{1,\infty})$ be sensitive with $\delta>0$ as a constant of sensitivity. Since each $f_j$ is uniformly continuous, therefore for each positive integer $0\leq l\leq m+2$, $f_j^l$ is uniformly continuous. Now, as $f_{1,\infty}$ is $m$-periodic, so by uniform continuity, for the above  $\delta>0$, there exists $\epsilon>0$ such that for  each $0\leq l\leq m+2$ and any $x$, $y\in X$, $j>0$, we have
 \begin{equation}\label{3}
 d(x, y)<\epsilon \ \Rightarrow \  d(f_j^l(x), f_j^l(y))<\delta. 
 \end{equation}
 By sensitivity of $(X, f_{1,\infty})$, we have for any $x\in X$ and any open neighborhood $U$ of $x$ and in particular for $\xi$-neighborhood  $(0<\xi<\epsilon)$ of $x$ there exist $y\in B(x, \xi)$ and $n_{\xi}\in\mathbb{N}$ such that $d(f_1^{n_{\xi}}(x), f_1^{n_{\xi}}(y))>\delta$. Now as $\xi<\epsilon$, so using (\ref{3}), we have $n_{\xi}>m+1>m$ and using division algorithm, we get a positive integer $r$ such that $n_{\xi} = rm + q$, $0\leq q\leq m-1$. Thus \begin{align*}d(f_1^{n_{\xi}}(x), f_1^{n_{\xi}}(y)) & = d(f_1^{rm+q}(x), f_1^{rm+q}(y)) \\ & = d(f_{rm+1}^q(f_1^{rm}(x)), f_{rm+1}^q(f_1^{rm}(y))).\end{align*} Since $q = (n_{\xi} -rm)\leq (m-1)<(m+2)$, therefore using (\ref{3}) and $d(f_{rm+1}^q(f_1^{rm}(x))$, $f_{rm+1}^q(f_1^{rm}(y)))$ $>\delta$, we get that $d(f_1^{rm}(x), f_1^{rm}(y))\geq \epsilon$. Taking $0<\eta<\epsilon$, we get $d(f_1^{rm}(x), f_1^{rm}(y))>\eta$, that is, $d(g^r(x), g^r(y))>\eta$, implying $(X, g)$ is sensitive. Hence, $(I, g)$ is sensitive.  Now, for autonomous systems sensitivity on $I$ implies cofinite sensitivity and hence $(I, g)$ is syndetically sensitive, ergodic sensitive, collective sensitive and multi-sensitive.
 
If $(I, g)$ is syndetically sensitive, then  for any $x\in I$ and any open neighborhood $U$ of $x$, there exist $\delta>0$ and $n_i\in\mathbb{N}$ with $N_{g}(U, \delta) = \{n_1, n_2, n_3, n_4, \ldots\}$ such that $n_{i+1}-n_{i}\leq M$, for all $i\geq 1$ and for some positive constant $M$. Since $(I, f_{1,\infty})$ is $m$-periodic, therefore $g^{n_i} = (f_1^m)^{n_i} = f_1^{mn_i}$ and hence $N_{f_{1,\infty}}(U, \delta) = \{k_1, k_2, k_3, k_4,\ldots\}$, where $k_i = mn_i$. Note that $(k_{i+1} - k_i)\leq M/m$, for all $i\geq 1$.  Thus, $N_{f_{1,\infty}}(U, \delta)$ has bounded gaps implying that $(I, f_{1,\infty})$ is syndetically sensitive.

Since syndetical sensitivity implies ergodic sensitivity, therefore $(I, f_{1,\infty})$ is also ergodic sensitive. We also show it holds directly. If $(I, g)$ is ergodically  sensitive, then for any $x\in I$ and any open neighborhood $U$ of $x$, there exists $\delta>0$ such that $N_{g}(U, \delta)\ne\emptyset$ with $\overline{d}(N_{g}(U, \delta))>0$. Since $(I, f_{1,\infty})$ is $m$-periodic, therefore $g^{n} = (f_1^m)^{n} = f_1^{mn}$, for every $n\in\mathbb{N}$ and hence $\overline{d}(N_{f_{1,\infty}}(U, \delta))>0$. Thus, the system $(I, f_{1,\infty})$ is ergodically sensitive.  

 Now, suppose that $(X, g)$ is collectively sensitive and let $x_1$, $x_2$, \ldots, $x_k\in X$ and $\epsilon>0$  is given. Since $(X, g)$ is collectively sensitive, therefore there exists $\delta>0$ such that corresponding to above $x_i$'s, there exist  $y_i\in X$, $i= 1, 2,\ldots, k$ with $d(x_i, y_i)<\epsilon$, $n\in\mathbb{N}$ and $i_0$ with $1\leq i_0\leq k$ such that $d(g^n(x_i), g^n(y_{i_0}))>\delta$ or $d(g^n(y_i), g^n(x_{i_0}))>\delta$, that is, $d((f_1^m)^n(x_i), (f_1^m)^n(y_{i_0}))>\delta$ or $d((f_1^m)^n(y_i), (f_1^m)^n(x_{i_0}))>\delta$, for $i = 1, 2, \ldots, k$. Now, since the system $(X, f_{1,\infty})$ is $m$-periodic, therefore $f_1^{mn} = (f_1^m)^n$. Hence, we get that $d(f_1^{nm}(x_i), f_1^{nm}(y_{i_0}))>\delta$ or $d(f_1^{nm}(y_i), f_1^{nm}(x_{i_0}))>\delta$. Thus, there exists $p = nm\in\mathbb{N}$ such that $d(f_1^p(x_i), f_1^p(y_{i_0}))>\delta$ or $d(f_1^p(y_i), f_1^p(x_{i_0}))>\delta$, for $i = 1, 2, \ldots, k$ implying that $(X, f_{1,\infty})$ is collectively sensitive. Also, if $(I, g)$ is multi-sensitive, then by \cite[Theorem 3.2]{14}, the corresponding non-autonomous system $(I, f_{1,\infty})$ is multi-sensitive.
\end{proof}
\begin{cor}\label{c4.3} For an $m$-periodic non-autonomous system $(I, f_{1,\infty})$, weakly mixing implies ergodic sensitivity and  syndetical sensitivity.
\end{cor}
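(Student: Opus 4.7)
The plan is to deduce the corollary by concatenating two results already established above. The hypothesis gives that $(I, f_{1,\infty})$ is weakly mixing and $m$-periodic on the closed unit interval $I$, so by the preceding theorem (weakly mixing on a commutative $m$-periodic non-autonomous system on an interval implies Devaney chaos) the system is Devaney chaotic, and in particular exhibits sensitive dependence on initial conditions. This is the substantive first step, and it essentially imports the interval-theoretic fact that for autonomous systems transitivity on intervals forces Devaney chaos (which was used inside that theorem via $g = f_m \circ \cdots \circ f_1$).

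With sensitivity in hand, I then feed $(I, f_{1,\infty})$ into Theorem \ref{4.3}, whose hypotheses are precisely that the system is $m$-periodic on $I$ and sensitive. That theorem upgrades sensitivity to syndetical sensitivity, ergodic sensitivity, collective sensitivity and multi-sensitivity all at once; keeping only the first two gives the conclusion of the corollary. So schematically the argument is: weakly mixing $\Rightarrow$ Devaney chaos (by the previous theorem) $\Rightarrow$ sensitivity $\Rightarrow$ ergodic and syndetic sensitivity (by Theorem \ref{4.3}).

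There is no genuine obstacle here — the proof is a one-line composition of the two preceding results. The only subtlety worth flagging is that the preceding theorem was stated for \emph{commutative} $m$-periodic systems, so commutativity of $f_{1,\infty}$ must be read as an implicit hypothesis of this corollary (inherited from the enclosing context) in order to invoke it; Theorem \ref{4.3} itself requires no such assumption. Once this is noted, the proof consists of a single sentence citing the two results and takes no further work.
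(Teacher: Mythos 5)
Your overall architecture (get sensitivity, then feed it into Theorem \ref{4.3}) matches what the paper intends, but your route to sensitivity is a detour that changes the statement being proved. The corollary as stated assumes only that $(I,f_{1,\infty})$ is $m$-periodic and weakly mixing; it does \emph{not} assume commutativity. By going through the ``weakly mixing $\Rightarrow$ Devaney chaos'' theorem you are forced to import commutativity as an extra hypothesis, so as written your argument establishes a strictly weaker result. This is unnecessary: weak mixing implies sensitive dependence directly, with no commutativity and no interval structure. Indeed, pick non-empty open sets $V_1, V_2\subseteq I$ with $d(V_1,V_2)>2\delta$ for some $\delta>0$; given any $x$ and any neighborhood $U$ of $x$, weak mixing applied to the quadruple $U, U, V_1, V_2$ yields $n$ and points $u_1,u_2\in U$ with $f_1^n(u_1)\in V_1$ and $f_1^n(u_2)\in V_2$, so $d(f_1^n(u_1),f_1^n(u_2))>2\delta$ and by the triangle inequality at least one of $u_1,u_2$ satisfies $d(f_1^n(x),f_1^n(u_i))>\delta$. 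This is exactly the implication the paper uses tacitly elsewhere (e.g.\ in Example \ref{se}, ``weakly mixing and hence sensitive''), and it is the intended first step here. Combined with Theorem \ref{4.3} it proves the corollary in full generality. Your version is not wrong under the added commutativity assumption, but it buys nothing in exchange for that assumption and for the heavier interval-theoretic input (dense periodic points via $g=f_m\circ\cdots\circ f_1$) that the Devaney chaos theorem carries along; you should replace the first step by the direct two-open-sets argument.
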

\begin{rmk} Note that Corollary \ref{c4.3} is not true if the non-autonomous is not $m$-periodic as justified by the Example \ref{se}. 
\end{rmk}

\end{document}